\undefined \usepackage[hypertex]{hyperref} \else \usepackage[pdftex,pdfstartview=FitH,plainpages=false,pdfpagelabels]{hyperref} \fi
\numberwithin{equation}{section}
\newcommand\xappa\kappa
\newcommand\yota\iota
\newcounter{consta}
\newcounter{constb}
\newcounter{constc}[section]
\DeclareFontFamily{OT1}{rsfs}{}
\DeclareFontShape{OT1}{rsfs}{n}{it}{<-> rsfs10}{}
\DeclareMathAlphabet{\mathscr}{OT1}{rsfs}{n}{it}
\newtheorem{thm}{Theorem}[section]
\newtheorem{lem}[thm]{Lemma}
\newtheorem{prop}[thm]{Proposition}
\newtheorem*{lem*}{Lemma}
\newtheorem*{thm*}{Theorem}
\newtheorem*{conj*}{Conjecture}
\newtheorem*{prop*}{Proposition}
\newtheorem{defn*}{Definition}
\newtheorem{ex}[thm]{Example}
\newtheorem{cor}[thm]{Corollary}
\theoremstyle{definition}
\newtheorem{defn}[thm]{Definition}
\theoremstyle{remark}
\newtheorem{rem}[thm]{Remark}
\newtheorem{obs}[thm]{Observation}
\newtheorem*{obs*}{Observation}
\newtheorem*{rem*}{Remark}
\theoremstyle{definition}\newtheorem*{acknowledgments}{Acknowledgments}
\begin{document}
\title[Generalizations of Furstenberg's Diophantine result]{Generalizations of Furstenberg's Diophantine result}
\author{Asaf Katz}
\address{Einstein Institute of Mathematics, The Hebrew University of Jerusalem, Jerusalem,
91904, Israel.}
\email{asaf.katz@mail.huji.ac.il}

\thanks{The research was supported by ERC grant (AdG Grant 267259) and ISF grant (983/09).}
\date{}

\begin{abstract} We prove two generalizations of Furstenberg's Diophantine result regarding density of an orbit of an irrational point in the one-torus under the action of multiplication by a non-lacunary multiplicative semi-group of $\mathbb{N}$.
We show that for any sequences $\{a_{n} \},\{b_{n} \}\subset\mathbb{Z}$ for which the quotients of successive elements tend to $1$ as $n$ goes to infinity, and any  infinite sequence $\{c_{n} \}$, the set $\{a_{n}b_{m}c_{k}x : n,m,k\in\mathbb{N} \}$ is dense modulo $1$ for every irrational $x$.
Moreover, by ergodic-theoretical methods, we prove that if $\{a_{n} \},\{b_{n} \}$ are sequence having smooth $p$-adic interpolation for some prime number $p$, then for every irrational $x$, the sequence $\{p^{n}a_{m}b_{k}x : n,m,k\in\mathbb{N} \}$ is dense modulo~1.

\end{abstract}
\maketitle

\section{Introduction}
In his seminal paper  \cite{furstenberg-disjointness}, H. Furstenberg proved the following result -
\begin{thm*}[Furstenberg's Diophantine result] Assume that $a,b$ are two multiplicatively independent integers, then for any irrational $x$, the set $\{a^{n}b^{m}x: n,m\in\mathbb{N}\}$ is dense in $\mathbb{R}/\mathbb{Z}$.
\end{thm*}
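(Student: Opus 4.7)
The plan is to pass to the orbit closure $Y := \overline{\{a^n b^m x \bmod 1 : n, m \in \mathbb{N}\}}$ in $\mathbb{R}/\mathbb{Z}$ and establish Furstenberg's topological rigidity theorem: any nonempty closed subset of $\mathbb{R}/\mathbb{Z}$ invariant under the maps $T_a : y \mapsto ay$ and $T_b : y \mapsto by$ is either finite or all of $\mathbb{R}/\mathbb{Z}$. Granted the rigidity theorem, the main statement follows quickly: $Y$ is nonempty, closed, and $(T_a, T_b)$-invariant, so either $Y = \mathbb{R}/\mathbb{Z}$ (which is the desired density) or $Y$ is finite. In the latter case, iterating $T_a$ on $x \in Y$ must eventually cycle, giving $(a^n - a^m) x \in \mathbb{Z}$ for some $n > m$, forcing $x$ to be rational and contradicting the hypothesis.

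To prove the rigidity theorem I would first use Zorn's lemma to extract a minimal nonempty closed $(T_a, T_b)$-invariant subset $M \subseteq Y$, with the goal of showing $M = \{0\}$. Suppose for contradiction that $M$ contains some $y \neq 0$. By minimality, the $(T_a, T_b)$-orbit of $y$ is dense in $M$, so there exist pairs $(n_k, m_k) \neq (0, 0)$ with $a^{n_k} b^{m_k} y \to y$ modulo $1$. Setting $N_k := a^{n_k} b^{m_k} - 1$, one obtains $N_k \to \infty$ with $N_k y \to 0$ modulo $1$, exhibiting a semigroup-based super-rational approximation structure for $y$. The crucial arithmetic input of multiplicative independence (equivalently, the density of $\mathbb{Z}\log a + \mathbb{Z}\log b$ in $\mathbb{R}$, equivalently the non-lacunarity of $\{a^n b^m\}$ in $\mathbb{N}$) is then used to combine several such approximate returns at varying scales and produce elements of $M$ arbitrarily close to $0$, yielding $0 \in M$ and hence $M = \{0\}$ by minimality.

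Once it is known that $0 \in Y$, I would finish the proof of rigidity as follows. If $Y$ is infinite, pass to the (closed, $(T_a, T_b)$-invariant) derived set to locate an accumulation point, producing small nonzero elements $y_n \in Y$ with $y_n \to 0$. For any such $y_n$ the orbit $\{a^j b^k y_n \bmod 1\}$ is a subset of $Y$; since the semigroup $\{a^j b^k\}$ sorted in increasing order has successive ratios tending to $1$, for $a^j b^k |y_n|$ of order $1$ the fractional parts are spaced by $O(|y_n|)$, so this orbit is $O(|y_n|)$-dense in $\mathbb{R}/\mathbb{Z}$. Letting $n \to \infty$ and using closedness of $Y$ forces $Y = \mathbb{R}/\mathbb{Z}$, completing the rigidity theorem.

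The main obstacle is the arithmetic-topological step in the second paragraph: converting the algebraic relations $N_k y \to 0 \pmod 1$ into the topological conclusion that $M$ contains points arbitrarily close to $0$, using only the irrationality of $\log a / \log b$. This is the combinatorial heart of Furstenberg's original argument, and the need to control the interaction of multiple semigroup returns at different scales is precisely the point where multiplicative independence (and not merely individual expansion by $a$ or by $b$) enters in an essential way. All other ingredients — the Zorn reduction, the rational-point argument for finite invariant sets, and the non-lacunary density step — are essentially formal.
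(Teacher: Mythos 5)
Your sketch is Furstenberg's original rigidity argument from the 1967 disjointness paper: reduce to the claim that a nonempty closed $(T_a,T_b)$-invariant subset of $\mathbb{R}/\mathbb{Z}$ is finite or everything, take a minimal subset by Zorn's lemma, and exploit semigroup returns together with non-lacunarity. That is a correct approach, but it is not the paper's. The paper does not reprove Furstenberg's theorem at all; it only cites it. What the paper does prove is a strictly more general statement, Theorem~\ref{thm:non-lacunary}, by a genuinely different route: the elementary dilation lemma (Lemma~\ref{lem:density-of-non-lacunary}) combined with Boshernitzan's theorem that the exceptional set for dilations by a non-lacunary sequence has upper box dimension zero (Theorem~\ref{boshernitzan-exceptional}), fed through the Hausdorff--packing product inequality of Observation~\ref{obs:product-hausdorff}. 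Specializing $a_n=b_m=c_k=s_n$, where $s_1<s_2<\cdots$ enumerates the semigroup $\langle a,b\rangle$ (non-lacunary precisely because $a,b$ are multiplicatively independent), recovers Furstenberg's density statement as a corollary. This route entirely bypasses minimal sets and topological rigidity, replacing the combinatorial heart of Furstenberg's proof by an imported dimension estimate. Your final non-lacunary density step (small nonzero $y\in Y$ implies $\{a^j b^k y\}$ is $O(|y|)$-dense) is in effect the content of Lemma~\ref{lem:density-of-non-lacunary}, so the two proofs meet there but diverge everywhere else.

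Independently of the comparison, your proposal has a genuine gap, which you flag yourself. The step that converts the returns $N_k y\to 0 \pmod 1$ at varying scales into points of the minimal set $M$ arbitrarily close to $0$ is precisely the hard combinatorial lemma in Furstenberg's argument, and a one-sentence invocation of multiplicative independence does not discharge it; this is where the real work lies, and no mechanism for combining the returns is offered. A secondary gap: the claim that passing to the derived set produces small nonzero $y_n\in Y$ with $y_n\to 0$ is not automatic from $Y$ being infinite and containing $0$; one has to run the minimal-set analysis on the orbit closure of an irrational $y\in Y$ to exhibit $a^j b^k y\to 0$ through nonzero values. As written your proposal is a correct high-level outline of the classical proof with its essential step deferred, whereas the paper sidesteps that step entirely by appealing to Boshernitzan's theorem.
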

In \cite{boshernitzan-furstenberg-result}, M. Boshernitzan gave an elementary proof of Furstenberg's result.
Moreover, Furstenberg conjectured the following conjecture, which is still open -
\begin{conj*}[Furstenberg's $\times a,\times b$ conjecture] Assume that $a,b$ are two multiplicatively independent integers, and assume that $\mu$ is a Borel probability measure on $\mathbb{R}/\mathbb{Z}$, which is invariant under the maps $T_{a}(x)=ax \bmod 1,T_{b}(x)=bx \bmod 1$, and ergodic, then $\mu$ must be the Haar measure or atomic measure.
\end{conj*}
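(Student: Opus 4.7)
This is Furstenberg's famous still-open conjecture, so what follows is a plan for an attempt rather than a route with a guaranteed endpoint. The most productive framework to date is entropy-theoretic, and I would adopt it here.

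The first step is to split on the measure-theoretic entropy $h_\mu(T_a)$. In the positive-entropy case one invokes the theorem of Rudolph (refined by Johnson to dispense with the original coprimality hypothesis): any $T_a, T_b$-invariant, $T_a$-ergodic measure $\mu$ of positive $T_a$-entropy, with $a,b$ multiplicatively independent, is necessarily Lebesgue. Taking the ergodic decomposition under $T_a$ and applying this on each component reduces the problem to showing that an ergodic, joint-invariant, non-atomic $\mu$ must have $h_\mu(T_a) > 0$.

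The second step -- ruling out the zero-entropy case -- is essentially the content of the conjecture itself, and it is the main obstacle. Natural lines of attack include: exploiting the irrationality of $\log a/\log b$ (a consequence of multiplicative independence) to produce, through the commutation $T_a T_b = T_b T_a$, enough transverse expansion to force positivity of entropy; pursuing a rigidity argument of the type Einsiedler, Katok and Lindenstrauss used for diagonalizable actions on $\mathrm{SL}(3,\R)/\mathrm{SL}(3,\Z)$, viewing $(\R/\Z, T_a, T_b)$ as a boundary action of such a higher-rank diagonal action; or aiming only at partial results, as in the Bourgain--Lindenstrauss--Michel--Venkatesh quantitative strengthening of Rudolph or in Hausdorff-dimension lower bounds of Furstenberg type.

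The hard step is precisely this zero-entropy exclusion, which has resisted every known technique since 1967. Without a genuinely new input -- a rigidity theorem for commuting endomorphism actions, or a new arithmetic identity tied to the semigroup $\{a^n b^m\}$ -- I would not expect my plan to go beyond recovering the known partial results, and I would therefore present the approach honestly as a route to \emph{conditional} statements (positive entropy, or positive Hausdorff dimension of $\Supp \mu$) rather than to the full conjecture.
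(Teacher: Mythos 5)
You have correctly identified that this is Furstenberg's conjecture, which remains open, and the paper likewise states it only as a conjecture: there is no proof of it in the paper to compare against. The paper's role for this statement is purely expository --- it recalls the conjecture and then the Rudolph--Johnson theorem as the strongest known partial result, before moving on to density results (Theorems 1.1 and 1.2) that deliberately avoid the measure-classification problem. Your proposal is an honest and accurate survey of the state of the art: the entropy dichotomy via Rudolph--Johnson handles the positive-entropy case, and the zero-entropy case is precisely the open part.

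One small technical caution in your outline: you suggest ``taking the ergodic decomposition under $T_a$ and applying [Rudolph--Johnson] on each component.'' If $\mu$ is jointly invariant under $T_a$ and $T_b$ but you decompose it ergodically with respect to $T_a$ alone, the individual $T_a$-ergodic components need not be $T_b$-invariant --- $T_b$ only permutes them. The Rudolph--Johnson theorem is most cleanly applied to measures ergodic for the joint $\mathbb{N}^2$-action; since the conjecture already assumes $\mu$ is ergodic (for the semigroup generated by $T_a$ and $T_b$), no further decomposition is needed, and the dichotomy is simply on whether $h_\mu(T_a)$ is positive or zero. With that adjustment, your framing agrees with the standard picture, and your conclusion --- that one should not expect to get past the known partial results without a genuinely new rigidity input --- is the right one.
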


The strongest result up to date towards Furstenberg's conjecture is the following theorem by D. Rudolph and A. Johnson -
\begin{thm*}[\cite{johnson1992measures}] In the settings of Furstenberg's conjecture, if the measure $\mu$ has \emph{positive entropy} with respect to $T_{a}$, then $\mu$ is the Haar measure.
\end{thm*}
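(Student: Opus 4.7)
The plan is to follow the measure-rigidity approach of Rudolph, analyzing conditional measures of $\mu$ on unstable leaves of a hyperbolic extension of the $\times a,\times b$ system.

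Since $T_a$ is not invertible on $\R/\Z$, I would first pass to the inverse-limit solenoid $Y = \varprojlim (\R/\Z, T_a)$, on which $T_a$ becomes an invertible measure-preserving homeomorphism, while $T_b$ extends naturally because it commutes with $T_a$. Lifting $\mu$ to the canonical $T_a,T_b$-invariant extension $\tilde\mu$ preserves both the $T_a$-entropy and joint ergodicity, so it suffices to prove the statement for $\tilde\mu$ on $Y$. On $Y$ the map $T_a$ is hyperbolic with a one-dimensional expanding direction (the fiber of the projection onto $\R/\Z$, locally parametrized by $\R$) and an infinite-dimensional contracting direction coming from the backward orbits.

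Next I would fix a measurable partition $\xi$ subordinate to the unstable foliation of $T_a$ and study the conditional measures $\tilde\mu_\xi$ on its atoms, pushed forward to $\R$ via the leaf parametrization. The assumption $h_{\tilde\mu}(T_a) > 0$ forces these conditionals to be non-atomic almost surely: this is the one-dimensional case of the Ledrappier--Young relation and reduces to Shannon--McMillan--Breiman applied to the refining sequence $\bigvee_{n \geq 0} T_a^n \xi$. The crucial dynamical input is that both $T_a$ and $T_b$ preserve unstable leaves and act on each leaf by affine maps with derivatives $a$ and $b$ respectively, inducing scaling relations on the family $\{\tilde\mu_\xi\}$: up to controllable cocycles, this family is invariant under the multiplicative semigroup generated by $a$ and $b$ acting on $\R_{>0}$.

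The final step would invoke Rudolph's rigidity theorem: a non-atomic family of measures on $\R$ that is invariant under two multiplicatively independent scalings and satisfies suitable equivariance under the ambient dynamics must be Lebesgue; descending this back to $\R/\Z$ and using joint ergodicity then yields $\mu$ equal to the Haar measure. I expect the hardest part to be precisely this rigidity step. Multiplicative independence of $a,b$ gives density of $\{a^n b^m : n,m \in \Z\}$ in $\R_{>0}$, hence invariance of the conditionals under a topologically dense group of scales; but promoting this to genuine Lebesgueness requires Rudolph's delicate construction of refining countable partitions together with an inductive entropy accounting, and Johnson's extension then handles arbitrary multiplicatively independent $a,b$ (not only coprime ones) through a more careful analysis of the $p$-adic factors of the solenoid.
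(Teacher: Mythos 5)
The paper does not prove this statement: it is quoted from \cite{johnson1992measures} as a known input, and the only commentary offered is that Rudolph and Johnson argued directly with symbolic dynamics, while Host later gave an independent proof via a pointwise result. So there is no in-paper proof to compare against, and your outline is necessarily a reconstruction of the original authors' argument rather than of anything in this paper.

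Evaluated on its own terms, the structural reduction you describe is a reasonable modern reformulation: passing to the solenoid, working with conditional measures on unstable leaves, using positive $T_a$-entropy to force non-atomicity of the leafwise conditionals, and exploiting that $T_a$ and $T_b$ act by scalings of ratio $a$ and $b$ on those leaves are all legitimate steps, broadly how later expositions recast Rudolph's theorem. But the load-bearing step is black-boxed in a way that leaves the sketch circular. The ``Rudolph rigidity theorem'' you invoke -- that a non-atomic equivariant family of measures on $\R$ compatible with two multiplicatively independent scalings must be Lebesgue -- is not a quotable tool; after the reduction, it \emph{is} the theorem. The motivation you give for it also conflates the topological and measure-theoretic statements: density of $\{a^n b^m : n,m\in\mathbb{Z}\}$ in $\R_{>0}$ is exactly Furstenberg's density result, and the conditionals transform under the dynamics by a Radon--Nikodym cocycle rather than by literal scaling invariance, so ``invariance under a dense group of scales'' does not follow and would not automatically pass to the closed group for a measurable leafwise family even if it did. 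Closing precisely that gap is where the positive-entropy hypothesis must earn its keep, via the entropy bookkeeping and recurrence arguments (Rudolph's symbolic construction, or Host's martingale/pointwise approach) that your sketch names but does not supply. The proposal locates the difficulty correctly but leaves all of the actual machinery inside the black box.
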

Rudolph and Johnson proved their measure classification result directly by using symbolic dynamics.
By different methods, B. Host established a related pointwise result in \cite{host1995nombres}, which implies the measure classification result.

Following Host's ideas, D. Meiri defined the following class of sequences -
\begin{defn*}
A sequence of integers $\{a_{n} : n\in\mathbb{N}\}$ is a \emph{$p$-Host} sequence, if for every Borel probability measure $\mu$ on $\mathbb{R}/\mathbb{Z}$ which is invariant under the map $T_{p}(x)=p\cdot x\bmod(1)$,ergodic, and has positive entropy with respect to $T_{p}$, then for $\mu$-a.e. $x$, the set $\{a_{n}x : n\in \mathbb{N}\}$ is equidistributed in $\mathbb{R}/\mathbb{Z}$.
\end{defn*}
By this definition, Host's theorem is essentially the following statement -
\begin{thm*}[Host]
If $p$ and $q$ are co-prime integers, then the sequence $\{q^{n}\}$ is a $p$-Host sequence.
\end{thm*}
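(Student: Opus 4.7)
The plan is to verify Weyl's equidistribution criterion: for every nonzero integer $k$ one must show
$$\frac{1}{N}\sum_{n=0}^{N-1} e^{2\pi i k q^n x} \xrightarrow[N\to\infty]{} 0 \qquad \text{for } \mu\text{-a.e.\ } x.$$
First I would pass to a mean-$L^2$ estimate by expanding the square and using the definition of the Fourier coefficients of $\mu$:
$$\int \left|\frac{1}{N}\sum_{n=0}^{N-1} e^{2\pi i k q^n x}\right|^2 d\mu(x) \;=\; \frac{1}{N^2}\sum_{m,n=0}^{N-1} \widehat{\mu}\bigl(k(q^n-q^m)\bigr),$$
so convergence in $L^2(\mu)$ reduces to a Fourier-decay statement for $\mu$ along the multiplicative sequence $\{kq^n\}_{n\geq 1}$. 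The upgrade from $L^2$ to pointwise convergence I would then obtain by a standard maximal inequality argument along a sparse subsequence (say geometric) together with Borel--Cantelli.

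The substantive step is therefore the following Fourier-decay lemma: if $\mu$ is $T_p$-invariant, ergodic, with positive $T_p$-entropy $h>0$, and $\gcd(p,q)=1$, then $\widehat\mu(kq^n)\to 0$ for every nonzero integer $k$. I would prove it using the refining sequence of partitions $\xi_n=\bigvee_{i=0}^{n-1}T_p^{-i}\xi_0$ into $p$-adic intervals of length $p^{-n}$, together with their $\sigma$-algebras $\mathcal{F}_n$. By the Shannon--McMillan--Breiman theorem, positive entropy means that on a set of $\mu$-measure close to $1$, the conditional measure on an atom of $\xi_n$ is effectively supported on about $p^{nh/\log p}$ sub-atoms at the next refinement; equivalently, the conditional distribution of the base-$p$ digits is genuinely random. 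Coprimality of $p$ and $q$ ensures that multiplication by $q^n$ is a bijection on $\mathbb{Z}/p^N\mathbb{Z}$ for every $N$, and this lets one pull back the frequency $kq^n$, rewriting $\widehat\mu(kq^n)$ as a conditional average whose phase varies nontrivially across the sub-atoms. Averaging then produces cancellation proportional to the entropy-forced spread.

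The principal obstacle is precisely this Fourier-decay lemma. The $T_p$-invariance alone gives only the tautology $\widehat\mu(pk)=\widehat\mu(k)$, so without additional input nothing need decay (witness the $T_p$-invariant atomic measures on rational points, for which the conclusion fails). The positive-entropy hypothesis must therefore be used quantitatively, matching the spread of the conditional measures on $p$-adic intervals against the shifts of frequencies produced by multiplication by $q$; this is the genuine technical heart of Host's theorem. Once that lemma is in place, the reduction to Weyl sums and the maximal-function upgrade from $L^2$ to pointwise convergence are routine.
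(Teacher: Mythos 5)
The paper does not prove this theorem: it is cited from Host's 1995 paper \cite{host1995nombres} and used as a black box, so there is no in-paper proof to compare against. Your sketch can therefore only be judged on its own terms, and as it stands it has two concrete gaps.

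First, the algebraic reduction is off. Expanding the square gives the off-diagonal terms $\widehat\mu\bigl(k(q^n-q^m)\bigr)$, not $\widehat\mu(kq^n)$. Writing $q^n-q^m=q^m(q^{n-m}-1)$ for $n>m$, the frequencies appearing are $k'q^m$ with $k'=k(q^{n-m}-1)$ ranging over an unbounded family as $n-m$ grows. Decay of $\widehat\mu(kq^n)\to 0$ for each \emph{fixed} $k$ does not control this double sum; you would need a bound of the form $|\widehat\mu(\ell q^m)|\le \varphi(m)$ uniformly (or at least summably) over the relevant $\ell$, together with an explicit rate to feed into the Borel--Cantelli/maximal-inequality upgrade. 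This is a much stronger statement than the lemma you state.

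Second, and more fundamentally, the proposed Fourier-decay lemma is precisely the part you do not prove, and it is far from clear that it is even the right intermediate statement. Positive-entropy $T_p$-invariant ergodic measures need not be Rajchman (Lyons constructed $T_2$-invariant, mixing, positive-entropy Riesz products whose Fourier coefficients do not tend to zero), so one cannot hope for global Fourier decay and must argue something special about the sequence $\{q^n\}$. Host's actual proof does not go through a pointwise Fourier-decay statement at all; it works directly with conditional measures relative to the refining $p$-adic partitions, uses the Shannon--McMillan--Breiman theorem to quantify the spread of conditional measures forced by positive entropy, and exploits the bijectivity of $\times q$ on $\mathbb Z/p^N\mathbb Z$ inside a martingale/variance estimate conditioned on these $\sigma$-algebras, rather than reducing to a Weyl sum over $\mu$. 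Your third paragraph gestures at these ingredients (SMB, coprimality, spread of conditionals), but the phrase ``averaging then produces cancellation proportional to the entropy-forced spread'' is where the entire content of the theorem lives and is left unargued. You acknowledge this yourself by calling it ``the genuine technical heart''; as a proof, however, that heart is missing, so the proposal is an outline of a plausible (and not quite Host's) strategy rather than a proof.
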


In \cite{meiri}, D. Meiri  explicitly constructed $p$-Host sequences by studying the $p$-adic distribution properties of $p$-adic analytic functions. As a consequence, he derived a simple way of producing $p$-Host sequences via $p$-adic interpolation.

\begin{defn*}
We say that a sequence of integers $\{a_{n}\}$ has \emph{$p$-adic interpolation} if there exists a continuous function $f(x)$ defined in the unit disk of some finite extension of $\mathbb{Q}_{p}$ with $a_{n}=f(n)$.
\end{defn*}

We remark that E. Lindenstrauss expanded Meiri's work in \cite{lindenstrauss2001p}, and B. Host has also used $p$-adic formalism in \cite{host2000some}.

Recently in \cite{BLMV}, Bourgain-Lindenstrauss-Michel-Venkatesh gave an \emph{effective} proof of the Rudolph-Johnson theorem and as a result, Furstenberg's Diophantine result, using inequalities regarding linear forms in logarithms to give a quantitative form of the non-lacunarity which occurs in the semigroup $\langle p,q\rangle \subset\mathbb{N}$.

In this paper we focus on generalizing Furstenerg's density result, by considering only 1-parameter-action with some extra sequence which enable us to construct some invariant sets or measures with ''positive dimensionality''. Afterward, building upon the works by Boshernitzan and Meiri, we use the third sequence to conclude equidistribution or density.

Our first theorem is topological in nature and does not rely on measure-classification techniques.
\begin{thm}\label{thm:non-lacunary}\hypertarget{thm:non-lacunary}
Assume that $\{a_{n}\},\{b_{m}\}\subset\mathbb{N}$ are increasing sequences satisfying $\lim_{n\to\infty} a_{n+1}/a_{n}=1$, $\lim_{m\to\infty} b_{m+1}/b_{m}=1$ and $\{c_{k}\}$ is any unbounded sequence of integers, then for every irrational $x$, the set $\{a_{n}b_{m}c_{k}x : n,m,k\in\mathbb{N}\}$ is dense modulo $1$.
\end{thm}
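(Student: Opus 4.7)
I would argue by contradiction, in the style of Boshernitzan's proof of Furstenberg's Diophantine theorem. Suppose that $X := \overline{\{a_n b_m c_k x \bmod 1 : n,m,k \in \mathbb{N}\}}$ is a proper closed subset of $\mathbb{R}/\mathbb{Z}$, so that it omits some open interval $J$ of length $\delta > 0$. Consider the auxiliary closed set $C := \overline{\{c_k x \bmod 1 : k \in \mathbb{N}\}}$. Because $x$ is irrational and $\{c_k\}$ takes infinitely many distinct values (being unbounded), the set $\{c_k x \bmod 1\}$ is infinite, so $C$ has at least one accumulation point. Moreover $C$ cannot contain an open arc: if it did, then for every sufficiently large integer $N$ the scaled set $N\cdot C$ would wrap around the torus more than once and hence meet $J$, whereas $a_n b_m C \subset X$ must miss $J$ for every $n,m$.

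Next, I would establish the multiplicative denseness of $\{a_n b_m\}$: from $a_{n+1}/a_n \to 1$ and $b_{m+1}/b_m \to 1$ it follows that for every $\eta > 0$ there is a threshold $T_0(\eta)$ such that every interval $[T, T(1+\eta)]$ with $T \geq T_0(\eta)$ contains some $a_n b_m$. (Fix $m_0$ with $b_{m_0}$ small and apply non-lacunarity of $\{a_n\}$ at the rescaled target $T/b_{m_0}$.) The main step is then to pick an accumulation point $y^* \in C$ together with a subsequence $c_{k_i} x \to y^* \bmod 1$ with errors $\epsilon_i := c_{k_i} x - y^* \to 0$. Given a target $t \in J$, to land $a_n b_m c_{k_i} x$ near $t$ modulo $1$ it suffices to solve
\[
a_n b_m y^* + a_n b_m \epsilon_i \equiv t \pmod 1,
\]
and the strategy is to first choose $i$ large enough that $|\epsilon_i|$ is small, and then to tune $a_n b_m$ within a multiplicative window calibrated so that $a_n b_m \epsilon_i \bmod 1$ can hit any prescribed residue.

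The main obstacle is the quantitative calibration of three a priori independent rates: the rate at which the errors $\epsilon_i$ decay, the speed of non-lacunarity of $\{a_n\}$ and $\{b_m\}$ (encoded in $T_0(\eta)$), and the required accuracy $\delta$ determined by $J$. Since the hypotheses impose no quantitative control on $a_{n+1}/a_n \to 1$, on $b_{m+1}/b_m \to 1$, or on how $c_k x$ accumulates, one must still ensure these three rates can always be reconciled. I would attempt this via a double pigeonhole: first use Dirichlet's approximation theorem to produce integers $N$ with $\|Nx\|$ arbitrarily small; then use the multiplicative denseness of $\{a_n b_m\}$ together with the abundance of large $c_k$ to exhibit $a_n b_m c_k$ both close to such an $N$ and shifted so that its image on the torus lands in $J$. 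Once the calibration succeeds, some $a_n b_m c_k x$ lies in $J$, contradicting the definition of $X$ and completing the proof.
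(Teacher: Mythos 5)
Your proposal has a genuine gap, and you in fact name it yourself: ``the quantitative calibration of three a priori independent rates.'' That calibration is not a technicality to be cleared up later but is precisely where the theorem lives, and the closing sentence (Dirichlet approximation plus a ``double pigeonhole'') does not indicate a workable mechanism for it. Concretely, after you fix an accumulation point $y^{*}\in C$, the target congruence $a_n b_m y^{*}+a_n b_m\epsilon_i\equiv t\pmod 1$ asks you to steer two terms with a single dilation factor $a_n b_m$: the main term $a_n b_m y^{*}\bmod 1$ and the error term $a_n b_m\epsilon_i\bmod 1$. Nothing prevents $y^{*}$ from itself being an irrational that is badly distributed under dilation by the non-lacunary set $\{a_n b_m\}$ (these exceptional sets are small but need not be empty or consist only of rationals), and in that case the first term cannot be placed where you need it, while the second term is not under independent control because the same scale $a_n b_m$ multiplies $\epsilon_i$. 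Worse, your ``multiplicative denseness of $\{a_n b_m\}$'' is established by freezing one $m_0$ and varying $n$ only, so the sequence $\{b_m\}$ does no work at all in your argument; you have tacitly reduced to the two-sequence statement ``$\{a_n\}$ non-lacunary, $\{c_k\}$ unbounded $\Rightarrow\{a_n c_k x\}$ dense,'' which is at least as hard as the original and is exactly what needs to be proved.

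The paper's proof is of a different character and uses each of the three sequences for a distinct purpose, bypassing the calibration problem entirely. Since $\{c_k\}$ is unbounded and $x$ is irrational, the difference set $D=\overline{\{c_k x\}}-\overline{\{c_k x\}}$ contains $0$ as a non-isolated point. A short lemma (dilating a point of $D$ close to $0$ by the non-lacunary $\{b_m\}$) shows $\{b_m D : m\}$ is dense modulo $1$, hence $A-A=\mathbb{R}/\mathbb{Z}$ where $A=\overline{\{b_m c_k x : m,k\}}$. Because subtraction is Lipschitz, $\dim_{H}(A\times A)\geq 1$, and therefore $\overline{\dim}_{p}(A)\geq 1/2$. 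Now Boshernitzan's theorem says that for a non-lacunary $\{a_n\}$ each set $E^{I}_{\text{density}}(\{a_n\})$ has upper box dimension $0$, so the full exceptional set $E_{\text{density}}(\{a_n\})$ has upper packing dimension $0$ and cannot contain $A$. Any $y\in A\setminus E_{\text{density}}(\{a_n\})$ has $\{a_n y\}$ dense, and approximating $y$ by points $b_m c_k x$ finishes the proof. So the division of labour is: $\{c_k\}$ produces a non-isolated accumulation at $0$ in a difference set, $\{b_m\}$ converts this into positive packing dimension of the orbit closure, and $\{a_n\}$ is applied only through Boshernitzan's dimension bound on its exceptional set. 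This is the structural insight missing from your sketch.
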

The proof of this theorem follows in the spirit of Furstenberg's own proof, the extra information needed to conclude the theorem relies on a result of Boshernitzan~\cite{boshernitzan-dilations-of-nonlacunary-sequences}.

By combining the approach demonstrated in \cite{BLMV} towards the proof of Furstenberg's result and the results by Meiri and Lindenstrauss regarding $q$-Host sequences, we can deduce the following theorem -
\begin{thm}\label{thm:p-adic} \hypertarget{p-adic}
Fix $q \in \mathbb{N}$ and let $\{a_{m} : m \in \mathbb{N}\}, \{b_{k} : k\in\mathbb{N}\}$ be two sequences of integers satisfying the following conditions:
\begin{itemize}
	\item [Positive Entropy] There exists some prime number $p$ which divides $q$ such that $\{a_{m}\}$ admits a smooth $p$-adic interpolation with only finitely many critical points inside the unit disc.
	\item [$q$-Host] There some prime number $p$ which divides $q$ for which $\{b_{k}\}$ admits a smooth $p$-adic interpolation with only finitely many critical points inside the unit disc, and for every other prime $p'$ which divides $q$ for which $\{b_{k}\}$ does not admit a smooth $p'$-adic interpolation with finitely many critical points inside the unit disc, $\|b_{k}\|_{p'} \to 0$, where $\| \cdot \|_{p'}$ stands for the $p'$-adic norm.
\end{itemize}
Then for any $x\in\mathbb{R}\backslash\mathbb{Q}$ the set $\{q^{n}a_{m}b_{k}x : n,m,k\in\mathbb{N}\}$ is dense in $\mathbb{R}/\mathbb{Z}$.
\end{thm}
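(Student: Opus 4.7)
Fix an irrational $x \in \R$ and let $K = \overline{\{q^{n}a_{m}b_{k}x \bmod 1 : n,m,k \in \N\}} \subseteq \R/\Z$. Since $q \cdot q^n a_m b_k x = q^{n+1} a_m b_k x$ stays in the orbit, $K$ is forward-invariant under $T_q$. The plan is to show $K = \R/\Z$ in two stages, one per hypothesis: use the positive-entropy condition on $\{a_m\}$ to produce a $T_q$-invariant measure on $K$ of positive entropy, then use the $q$-Host condition on $\{b_k\}$ to conclude density.

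\emph{Stage 1 (measure construction).} Form the empirical averages
\[
\mu_{N,M} = \frac{1}{NM}\sum_{n=0}^{N-1}\sum_{m=0}^{M-1}\delta_{q^{n}a_{m}x \bmod 1},
\]
and extract a weak-$*$ accumulation point $\mu$ as $N,M \to \infty$ along a suitable sequence. A telescoping argument in the $n$-variable (the boundary term is $O(1/N)$) gives $T_q$-invariance of $\mu$, and by construction $\supp \mu \subseteq K$. The substantive claim is that $\mu$ has positive entropy with respect to $T_p$ for the prime $p\mid q$ supplied by the positive-entropy hypothesis. This is where the smooth $p$-adic interpolation of $\{a_m\}$ with only finitely many critical points inside the unit disc enters: following the $p$-adic analytic machinery of Meiri~\cite{meiri} and Lindenstrauss~\cite{lindenstrauss2001p} together with the quantitative non-lacunarity estimates of~\cite{BLMV}, one shows that $\overline{\{a_m x \bmod 1\}}$ cannot be too thin, and the $T_q$-saturation built into the outer average converts this non-thinness into a positive lower bound on $h_\mu(T_p)$, hence on $h_\mu(T_q)$. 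The irrationality of $x$ is used to rule out concentration on rational $T_q$-periodic points.

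\emph{Stage 2 ($q$-Host equidistribution).} Decompose $\mu = \int \mu_\omega\,d\omega$ into $T_q$-ergodic components; since entropy is affine under ergodic decomposition, a positive-measure family of the $\mu_\omega$ satisfy $h_{\mu_\omega}(T_q) > 0$. The two conditions imposed on $\{b_k\}$ — smooth $p$-adic interpolation with finitely many critical points for some prime $p \mid q$, and $p'$-adic convergence to $0$ for every other prime divisor of $q$ — are exactly those under which Meiri's and Lindenstrauss's theorems make $\{b_k\}$ a $q$-Host sequence: for every $T_q$-invariant, $T_q$-ergodic Borel probability measure $\nu$ on $\R/\Z$ of positive entropy, $\{b_k y \bmod 1\}$ equidistributes for $\nu$-a.e. $y$. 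Apply this with $\nu = \mu_\omega$ to obtain a $y \in K$ for which $\{b_k y \bmod 1\}$ is in particular dense in $\R/\Z$. Since $K$ is closed and $y \in K$, every $b_k y$ lies in $K$, forcing $K = \R/\Z$.

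\emph{Main obstacle.} The crux is Stage 1: extracting a positive-entropy $T_q$-invariant measure from the $\{q^n a_m\}$-orbit of an arbitrary irrational $x$. Given such a measure, Stage 2 is a formal application of the $q$-Host theorem. The technical work consists in translating the smooth $p$-adic interpolation of $m \mapsto a_m$ — an analytic condition — into a lower bound on the dimension or entropy of a dynamically constructed measure on the circle, with $p$-adic smoothness and the finite critical-point hypothesis playing the role that non-lacunarity and linear-forms-in-logarithms estimates play in~\cite{BLMV}.
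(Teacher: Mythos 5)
Your two-stage architecture (build a $T_q$-invariant positive-entropy measure supported on $\overline{\{q^na_mx\}}$, then apply the $q$-Host property of $\{b_k\}$) is the same as the paper's, and your Stage~2 is essentially the paper's proof of its Theorem~4.1: invoke Meiri's and Lindenstrauss's criteria to see that $\{b_k\}$ is $q$-Host, pick an ergodic component of positive entropy, find a generic $y$ for which $\{b_ky\}$ equidistributes, and conclude $K=\R/\Z$. That part is fine.

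The gap is in Stage~1, and you name it yourself as ``the crux'' without actually filling it. Your candidate measure is the empirical average $\mu_{N,M}=\frac{1}{NM}\sum_{n,m}\delta_{q^na_mx}$, with positive entropy asserted to follow because ``$\overline{\{a_mx\}}$ cannot be too thin'' and because ``the $T_q$-saturation built into the outer average converts this non-thinness into positive entropy.'' Neither assertion is argued, and neither is obvious. The positive $p$-adic combinatorial entropy of $\{a_m\}$ says that $a_m\bmod p^N$ takes many values; it says nothing a priori about how the real numbers $a_mx\bmod 1$ spread for a given irrational $x$, and averaging a possibly concentrated atomic measure along a $T_q$-orbit does not manufacture entropy (a Dirac mass averaged along an orbit has zero-entropy weak-$*$ limits). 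The mechanism the paper uses to cross this gap is the content of its Proposition~3.6 and is genuinely non-trivial: using forward-expansiveness of $T_q$ and Schwartzman's lemma, one shows the $T_q$-orbit closure $\mathcal{O}$ of the irrational $x$ cannot be injectively mapped by $T_q$, and from the resulting collision one extracts points $x_N,y_N\in\mathcal{O}$ with $x_N-y_N=\ell_N/q^N$, where $\gcd(\ell_N,q)$ is independent of $N$. It is only after multiplying this \emph{rational} difference by $a_m$ that the $p$-adic combinatorial entropy of $\{a_m\}$ produces $\gtrsim p^{cN}$ distinct residues, hence $\gtrsim p^{cN}$ occupied atoms of $P_{q^N}$ for one of $\{a_mx_N\}$, $\{a_my_N\}$. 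The measure one averages is the uniform measure on representatives of these atoms, not an empirical average over the raw orbit; the entropy lower bound $H_{m_N}(P_{q^N})\gtrsim N$ is then immediate, and one still has to average over $T_q$ carefully (over a window of length $o(N)$) and establish an upper semi-continuity statement for entropy across non-invariant measures to pass to the weak-$*$ limit. Without the rational-difference-point step, your Stage~1 does not deliver positive entropy, so the proof as written does not go through.
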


Various other generalizations of Furstenberg's result have been considered previously. \
Call a sequence of integers $\{a_{n}\}$ \emph{universally densifying sequence} if for every $\alpha \notin \mathbb{Q}$ the set $\{a_{n}\cdot\alpha\}$ is dense modulo $1$.
A famous theorem of Hardy and Littlewood (generalizing a lemma due to Kronecker) shows that $\{p(n)\}$ is a universally densifying sequence, where $p(n)$ is any non-constant polynomial with integer coefficients.
Furstenberg's result is equivalent to saying that a sequence of integers forming a non-lacunary multiplicative semigroup is a universally densifying sequence.
In \cite[Theorem~$1.2$]{kra1999generalization}, B. Kra proved that coupled sums of products formed from powers of multiplicatively independent integers form a universally densifying sequence, namely sets of the form $\left\{\left(\sum_{i=1}^{k}p^{n}_{i} q^{m}_{i}\right) \cdot \alpha\right\}$ are dense modulo $1$. Moreover, in \cite[Corollary~$2.2$]{kra1999generalization} Kra proves finer density properties of the sequence of $\{2^{n}3^{m}\alpha\}$ for irrational number $\alpha$, showing that adding arbitrary shifts depending on $n$ to this sequence results in a dense sequence as well.

In \cite{Gorodnik20122499}, A. Gorodnik and S. Kadyrov generalized Kra's result to higher-dimensional settings.
In \cite{urban2008sequences,urban2008algebraic} R. Urban generalized Kra's result by considering powers of multiplicatively-independent algebraic integers.

Our Theorem~\ref{thm:p-adic} is used in corollary~\ref{cor:universally-densifying} to provide new classes of universally densifying sequences, e.g. the sequence $\left\{2^{n}3^{3^{\cdot^{\cdot^{\cdot^{3^{p_{1}(m)}}}}}}3^{3^{\cdot^{\cdot^{\cdot^{3^{p_{2}(k)}}}}}} : n,m,k\in\mathbb{N}\right\}$, where $p_{1},p_{2}$ are polynomials with integer coefficients is universally densifying sequence, providing examples of very sparse universally densifying sequences. This answers a question by Y. Bugeaud.

\begin{acknowledgments}
The results of this paper were obtained as part of the author's M.Sc thesis at the Hebrew University
of Jerusalem under the guidance of Prof. Elon Lindenstrauss, to whom I am grateful for introducing me to the problem and many helpful discussions and insights. The author also wishes to thank Uri Shapira and Peter Varju for useful comments about a preliminary version of this paper.
\end{acknowledgments}

\section{Proof of Theorem~\ref{thm:non-lacunary}.} \label{sec:non-lacunary}
We begin the proof with the following easy lemma, essentially due to Furstenberg, which we include for the sake of completeness -
\begin{lem}\label{lem:density-of-non-lacunary} Let $\{a_{n} : n\in \mathbb{N}\}\subset\mathbb{N}$ be a sequence of increasing integers satisfying $\lim_{n\to \infty}a_{n+1}/a_{n}=1$, and $\{d_{m} : m\in\mathbb{N}\}\subset\mathbb{R}/\mathbb{Z}$ be a sequence such that $0$ is a non-isolated point in the closure of $\{d_{m}\}$, then the set $\{a_{n}d_{m} : n,m\in\mathbb{N} \}$ is dense modulo $1$.
\end{lem}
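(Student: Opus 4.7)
The plan is to reduce the density claim to the following mechanism: if a point on $\mathbb{R}/\mathbb{Z}$ is very close to $0$, then multiplying it by a long stretch of the sequence $\{a_n\}$ sweeps out an $\varepsilon$-dense subset of $\mathbb{R}/\mathbb{Z}$, because the sequence grows so slowly that the multiplicative jumps stay small.

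First, I fix a target $t \in \mathbb{R}/\mathbb{Z}$ and $\varepsilon > 0$. By the hypothesis $a_{n+1}/a_n \to 1$, there exists $N_0$ such that $a_{n+1} - a_n < \varepsilon a_n$ for all $n \geq N_0$. By the hypothesis that $0$ is a non-isolated point in the closure of $\{d_m\}$, I can pick an index $m$ with $d_m \neq 0$ mod $1$ and with $\eta := \| d_m \|_{\mathbb{R}/\mathbb{Z}}$ (the distance from $d_m$ to the nearest integer) positive and as small as I wish; in particular I arrange $\eta < 1/a_{N_0}$. Replacing $d_m$ by its nearest-integer representative, I may treat $d_m = \pm \eta \in \mathbb{R}$; since multiplication by $-1$ preserves density mod $1$, I may assume $d_m = \eta > 0$.

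Second, I examine the finite sequence of real numbers $a_{N_0} \eta, a_{N_0+1}\eta, \ldots, a_{n_1} \eta$, where $n_1$ is the least index $\geq N_0$ with $a_{n_1}\eta \geq 1$ (such $n_1$ exists because $a_n \to \infty$). By the choice of $\eta$, the first term is less than $1$; by the definition of $n_1$ and the bound $a_{n_1} \leq (1+\varepsilon) a_{n_1-1}$, the last term lies in $[1, 1+\varepsilon)$. The consecutive gaps are
\[
(a_{n+1} - a_n)\eta < \varepsilon \cdot a_n \eta \leq \varepsilon
\]
for $n \in [N_0, n_1 - 1]$, since $a_n \eta \leq 1$ in that range. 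Hence $\{a_n \eta : N_0 \leq n \leq n_1\}$ is $\varepsilon$-dense in $[0,1]$ as a subset of $\mathbb{R}$, and therefore $\varepsilon$-dense in $\mathbb{R}/\mathbb{Z}$ after reduction mod $1$. In particular one of these values lies within $\varepsilon$ of $t$, giving the desired pair $(n,m)$.

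The argument is really only one step; the one place requiring a little care is the coordination of the two smallness requirements, namely choosing $\eta$ small enough that $a_{N_0}\eta < 1$ so that the multiplicative gap bound $(a_{n+1}-a_n)\eta < \varepsilon$ remains in force throughout the relevant range $[N_0, n_1-1]$. Beyond that the proof is a routine ``slowly growing sequence sweeps a small point around the circle'' argument in the spirit of Furstenberg.
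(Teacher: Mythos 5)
Your overall strategy is the same as the paper's: fix $\varepsilon>0$, use non-lacunarity to get a tail where the multiplicative gaps are small, pick a nonzero $d_m$ very close to $0$, and ride the slowly-growing sequence $\{a_n d_m\}$ from near $0$ up past $1$ so that it $\varepsilon$-fills the circle. You are even a bit more careful than the paper in one spot, since you explicitly justify why you may take the representative of $d_m$ to be a small \emph{positive} real, whereas the paper silently assumes a point of $\{d_n\}$ can be found in $(0,\varepsilon/a_{n_0+1})$.

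However, there is a genuine gap in your choice of $\eta$. You require only $\eta < 1/a_{N_0}$, and in your closing remark you emphasize that $a_{N_0}\eta<1$ is the coordination needed. But ``first term less than $1$'' plus ``gaps $<\varepsilon$'' plus ``last term in $[1,1+\varepsilon)$'' does \emph{not} yield $\varepsilon$-density of $\{a_n\eta : N_0\le n\le n_1\}$ in $[0,1]$. For instance, if $a_{N_0}\eta=0.5$ and $\varepsilon=0.1$, your set (mod $1$) lies in $[0.5,1)\cup[0,\varepsilon)$ and misses the point $0.25$ by more than $\varepsilon$. What you actually need is for the \emph{starting} value $a_{N_0}\eta$ to already be within $\varepsilon$ of $0$, i.e.\ you must choose $\eta < \varepsilon/a_{N_0}$. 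This is precisely what the paper does (it takes $x_0\in(0,\varepsilon/a_{n_0+1})$). With that single change your chain of values runs from something $<\varepsilon$ up to something in $[1,1+\varepsilon)$ with all consecutive gaps $<\varepsilon$, and $\varepsilon$-density in $[0,1]$ (hence in $\mathbb{R}/\mathbb{Z}$) follows. The rest of your argument is correct.
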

\begin{proof}
We show that the sequence is $\varepsilon$-dense, for any $\varepsilon>0$.
By the non-lacunarity of $\{a_{n}\}$ there exists some $n_{0}$ such that for all $n>n_{0}$, $a_{n+1}-a_{n}<\varepsilon a_{n}$.
Define $x_{0}$ to be a point in $\{d_{n}\}\cap(0,\varepsilon/a_{n_{0}+1})$.
Define $S=\{a_{n_{0}+1},\ldots,a_{m}\}$ to be all the elements which are larger than $a_{n_{0}}$ but smaller than $1/x_{0}$.
For all $n_{0}\leq i \leq m$ we have - $a_{i+1}x_{0}-a_{i}x_{0}\leq (a_{i+1}-a_{i})x_{0}\leq \varepsilon a_{i}x_{0}$.
Hence we deduce that the set $\{Sx_{0}\}$ is $\varepsilon$-dense in $\mathbb{R}/\mathbb{Z}$.
By the choice of $x_{0}$, we have that $a_{i}x_{0}\leq 1$ for any $a_{i}\in S$.
\end{proof}

\begin{defn}
For any sequence $\{a_{n} : n \in\mathbb{N}\}\subset\mathbb{N}$, define the \emph{exceptional set} $E_{\text{density}}(\{a_{n}\})$ to be all the numbers $x\in\mathbb{R}/\mathbb{Z}$ so that $\{a_{n}x : n \in \mathbb{N}\}$ is not dense modulo 1.
For an interval $I \subset\mathbb{R}/\mathbb{Z}$ define the exceptional set $E^{I}_{\text{density}}(\{a_{n}\})$ to be all the numbers $x\in\mathbb{R}/\mathbb{Z}$ so that $\{a_{n}x : n \in \mathbb{N} \} \cap I =\emptyset$.
\end{defn}

We recall the definitions of \emph{upper box dimension} and \emph{upper packing dimension} which will be used in the following proof.
\begin{defn}[Upper box dimension]
Let $A$ be a non-empty Borel bounded subset of an Euclidean space.
Denote by $N_{\delta}(A)$ the smallest number of sets of diameter at-most $\delta$ which cover $E$.
We define the \emph{upper box dimension} as
\begin{equation*}
\overline{dim}_{box}(E)=\underset{\delta \rightarrow 0}{\overline{lim}} -\frac{log(N_{\delta}(E))}{log(\delta)}.
\end{equation*}
This dimension is also known as the \emph{upper Minkowski dimension}.
\end{defn}

\begin{defn}[Upper packing dimension]
Let $A$ be a Borel subset of an Euclidean space, we define the \emph{upper packing dimension} of $A$ to be
\begin{equation*}
\overline{dim}_{p}(A)=\inf\left\{\sup_{i} \overline{dim}_{box}(A_{i}) \middle\vert A=\bigcup_{i=1}^{\infty}A_{i}, A_{i} \text{ is bounded} \right\}.
\end{equation*}
\end{defn}
\begin{rem}
Obviously, $\overline{dim}_{p}(A)\leq \overline{dim}_{box}(A)$.
\end{rem}

The following result is due to Boshernitzan.
\begin{thm}[\cite{boshernitzan-dilations-of-nonlacunary-sequences}, Theorem~$1.3$]\label{boshernitzan-exceptional} If $\{b_{m} : m\in \mathbb{N}\}$ is a sequence satisfying $\lim_{m\to\infty} b_{m+1}/b_{m}=1$ then for any open interval $I\subset\mathbb{R}/\mathbb{Z}$
$$\overline{\dim}_{box}E^{I}_{\text{density}}(\{b_{m}\})=0.$$
\end{thm}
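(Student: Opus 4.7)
The plan is to combine the quantitative non-lacunarity packaged in Lemma~\ref{lem:density-of-non-lacunary} with a multi-scale covering argument. Since upper box dimension zero is equivalent to $N_\delta(E) \leq \delta^{-\eta}$ for every $\eta > 0$ and all sufficiently small $\delta$ (writing $E := E^I_{\text{density}}(\{b_m\})$), the proof reduces to producing such a cover for each fixed $\eta$.

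Step 1 (avoidance near $0$): I would first establish the following auxiliary claim. There exist $N_0 \in \mathbb{N}$ and $\delta_0 > 0$, depending only on $|I|$ and the non-lacunarity rate of $\{b_m\}$, such that every $x \in E$ and every $m \geq N_0$ satisfies $b_m x \bmod 1 \notin (0, \delta_0) \cup (1 - \delta_0, 1)$. The argument is by contradiction: if $b_{m_0} x \bmod 1 \in (0, \delta_0)$ for some $m_0 \geq N_0$, one applies Lemma~\ref{lem:density-of-non-lacunary} to the real-valued non-lacunary sequence $a_j := b_{m_0 + j}/b_{m_0}$ (the proof of the lemma goes through verbatim for real sequences with $a_{j+1}/a_j \to 1$, which we inherit from $\{b_m\}$ once $m_0$ is large). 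The conclusion is that $\{b_{m_0+j} x \bmod 1 : j \geq 0\}$ is $\epsilon$-dense for any $\epsilon < |I|$, hence meets $I$, contradicting $x \in E$. The integer part $\lfloor b_{m_0} x \rfloor$ enters the recursion only as an affine shift which is absorbed by the density conclusion.

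Step 2 (multi-scale covering): I would then produce the covering bound. Fix $\eta > 0$ and an integer $k = k(\eta)$. For small $\delta$, choose scales $m_1 < m_2 < \cdots < m_k$ with $1/b_{m_k} \approx \delta$ and with each ratio $b_{m_{i+1}}/b_{m_i}$ very close to $1$ (possible by hypothesis once $m_1$ is large). By Step 1, for every $x \in E$ the ``itinerary'' $(b_{m_1} x \bmod 1, \ldots, b_{m_k} x \bmod 1)$ avoids a uniform neighborhood of $\{0\}$ in each coordinate. A multi-scale pigeonhole argument, exploiting that $\prod_i b_{m_{i+1}}/b_{m_i} = b_{m_k}/b_{m_1}$ is comparable to $1/\delta$ while each factor is close to $1$, bounds the number of $\delta$-intervals supporting a valid itinerary by $C^k$ for a constant $C$ depending only on $\delta_0$. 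Letting $k$ grow just slowly enough with $1/\delta$ produces $N_\delta(E) \leq \delta^{-\eta}$. The main obstacle lies in executing this bookkeeping cleanly: one must upgrade Step 1 so that the forbidden window can be shifted to exclude a neighborhood of the current itinerary coordinate (not just of $0$), and one must balance $k$ against $\delta$ carefully. Finally, taking $\eta \to 0$ delivers $\overline{\dim}_{\mathrm{box}}(E) = 0$.
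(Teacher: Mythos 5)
The paper itself does not prove this statement: it is quoted as Theorem~1.3 of the cited work of Boshernitzan, so the only question is whether your sketch stands on its own, and it does not. The decisive problem is Step~1, whose claim is false. Take any irrational $x_{0}\in(0,1)$ and let $\{b_{m}\}$ be the increasing enumeration of $B=\{n\in\mathbb{N}: nx_{0}\bmod 1\in[0,1/2]\}$. An irrational rotation can spend only a bounded number of consecutive steps in the complementary arc of length $1/2$, so the gaps of $B$ are bounded and hence $b_{m+1}/b_{m}\to 1$. By construction the orbit $\{b_{m}x_{0}\bmod 1\}$ lies in $[0,1/2]$, so $x_{0}\in E^{I}_{\text{density}}(\{b_{m}\})$ for, say, $I=(0.6,0.7)$; yet for every $\delta>0$ there are infinitely many $n$ with $nx_{0}\bmod 1\in(0,\delta)$, and all such $n$ belong to $B$, so $b_{m}x_{0}\bmod 1$ enters $(0,\delta_{0})$ infinitely often for every $\delta_{0}>0$. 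Thus no $N_{0},\delta_{0}$ as in Step~1 exist, even for this single $x\in E$, let alone uniformly over $E$. (This is consistent with the theorem: the exceptional set for this sequence still has box dimension zero; the point is that membership in $E$ simply does not force the orbit to avoid a neighborhood of $0$.)

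The reason your argument for Step~1 breaks down is the passage to the ratio sequence $a_{j}=b_{m_{0}+j}/b_{m_{0}}$: these are not integers, so multiplication by $a_{j}$ is not a self-map of $\mathbb{R}/\mathbb{Z}$, and $b_{m_{0}+j}x\bmod 1$ is not determined by $\theta=b_{m_{0}}x\bmod 1$. Writing $b_{m_{0}}x=K+\theta$ with $K=\lfloor b_{m_{0}}x\rfloor$, one has $b_{m_{0}+j}x=a_{j}K+a_{j}\theta$, and the term $a_{j}K\bmod 1$ is not a fixed affine shift but a $j$-dependent, essentially arbitrary drift; this is precisely why Lemma~\ref{lem:density-of-non-lacunary} requires integer multipliers acting on a point that is small as a real number, and why it cannot be applied ``verbatim'' to real dilation factors. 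Since Step~2 is built entirely on the uniform avoidance of a neighborhood of $0$ (and is in any case only an outline: the $C^{k}$ bound on admissible itineraries and the balancing of $k$ against $\delta$ are asserted, not proved), the proposal does not establish the theorem. A correct treatment has to argue directly with covers of $E^{I}_{\text{density}}$ at many scales, as in Boshernitzan's paper, rather than through an orbit-near-zero dichotomy.
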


As a result one deduces that $\overline{\dim}_{p}E_{\text{density}}(\{b_{m}\})=0$.
We also need the following observation -
\begin{obs}\label{obs:product-hausdorff}
Let $A$ be a Borel set, and assume that $\dim_{H}(A\times A)\geq c$ for some constant $c$. Then $\overline{\dim}_{p}(A)\geq \frac{c}{2}$, where $\dim_{H}$ denotes the Hausdorff dimension.
\end{obs}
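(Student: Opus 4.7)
The plan is to reduce the observation to a well-known upper bound for the Hausdorff dimension of a product, namely the inequality
\[
\dim_{H}(X\times Y)\leq \dim_{H}(X)+\overline{\dim}_{p}(Y)
\]
valid for arbitrary Borel (or even analytic) sets in Euclidean space. This is a classical strengthening of the Marstrand product inequality due to Tricot, and it is precisely the reason packing dimension is the natural companion to Hausdorff dimension in product bounds: one cannot replace $\overline{\dim}_{p}(Y)$ by $\dim_{H}(Y)$, but it can be lowered from $\overline{\dim}_{box}(Y)$ via the countable-cover stabilization that defines $\overline{\dim}_{p}$.

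Assuming this inequality, I would apply it with $X=Y=A$, obtaining
\[
c\leq \dim_{H}(A\times A)\leq \dim_{H}(A)+\overline{\dim}_{p}(A).
\]
Since $\dim_{H}(A)\leq\overline{\dim}_{p}(A)$ (an elementary inequality coming from the fact that any cover by sets of diameter $\leq\delta$ already witnesses the Hausdorff dimension, while the packing dimension is at least the Hausdorff dimension for every Borel set), one immediately deduces $2\overline{\dim}_{p}(A)\geq c$, i.e. $\overline{\dim}_{p}(A)\geq c/2$, which is exactly what the observation asserts.

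The only substantive point is the product inequality itself; the rest is bookkeeping. If one wants to reprove it rather than quote it, the standard route is as follows: decompose $A=\bigcup_{i}A_{i}$ with each $A_{i}$ bounded and $\sup_{i}\overline{\dim}_{box}(A_{i})$ arbitrarily close to $\overline{\dim}_{p}(A)$; then $A\times A=\bigcup_{i,j}A_{i}\times A_{j}$, so by countable stability of $\dim_{H}$ it suffices to bound $\dim_{H}(A_{i}\times A_{j})$. For the bounded pair $A_{i}\times A_{j}$, one uses the standard covering argument: an efficient Hausdorff cover of $A_{i}$ at scale $\delta$ of total $s$-mass close to $1$ (where $s>\dim_{H}(A_{i})$) combined with a $\delta$-box cover of $A_{j}$ of cardinality at most $\delta^{-t}$ (where $t>\overline{\dim}_{box}(A_{j})$) produces a cover of $A_{i}\times A_{j}$ of bounded $(s+t)$-mass, giving $\dim_{H}(A_{i}\times A_{j})\leq s+t$. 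Letting $s\downarrow\dim_{H}(A)$ and $t\downarrow \overline{\dim}_{p}(A)$ and using countable stability finishes the proof.

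I expect no real obstacle here; the whole content of the observation is the Tricot inequality, and the application to $X=Y=A$ is immediate. The only mild subtlety is that packing dimension must be defined via countable bounded decompositions (as the paper does), which is exactly what makes the product inequality work for unbounded $A$.
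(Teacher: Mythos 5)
Your proof is correct and uses exactly the same key inequality as the paper (the Tricot product bound $\dim_H(A\times B)\le \dim_H(A)+\overline{\dim}_p(B)$, cited to Mattila, Theorem 8.10), applied with $B=A$ and combined with $\dim_H(A)\le\overline{\dim}_p(A)$. The paper's argument is precisely this two-line reduction, so there is nothing to compare beyond noting that your optional re-proof of the product inequality is a standard addition the paper omits.
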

This observation follows from the product properties of the Hausdorff dimension for Borel sets (c.f. \cite{mattila}, Theorem $8.10$), namely
\begin{equation*}
\dim_{H}(A\times B)\leq \dim_{H}(A)+\overline{\dim}_{p}(B).
\end{equation*}

Now we prove Theorem~\ref{thm:non-lacunary}.
\begin{proof}[Proof of Theorem~\ref{thm:non-lacunary}]
Let $x$ be an irrational number. Define the difference set $D=\overline{\{c_{k}x\}}-\overline{\{c_{k}x\}}$.
Because $\{c_{k} : k \in \mathbb{N}\}$ is an unbounded sequence of integers, the difference set $D$ contains zero as a non-isolated point.
By lemma~\ref{lem:density-of-non-lacunary}, the set $\{b_{m}D : m \in \mathbb{N}\}$ is dense modulo 1.
By observation~\ref{obs:product-hausdorff}, the set $\overline{\{b_{m}c_{k}x : m,k\in \mathbb{N}\}}$ has upper packing dimension larger than $1/2$, and in particular $\overline{dim}_{p}(\overline{\{b_{m}c_{k}x : m,k\in\mathbb{N}\}})>0$.
By \cite[Theorem~1.3]{boshernitzan-dilations-of-nonlacunary-sequences}, the closure of $\{b_{m}c_{k}x\}$ contains a point outside the exceptional set $E_{\text{density}}(\{a_{n}\})$, hence the result follows.
\end{proof}

\section{Construction of a $T_{q}$ invariant measure with positive entropy.}\label{measure}

We start with some definitions.
\begin{defn}[$T_{q}$ map]
Given an integer $q$, the $\times q$ map on the torus is defined as
\begin{equation*}
T_{q}(x)=q\cdot x \text{ mod }1.
\end{equation*}
For a real number $x$ we denote the distance from $x$ to the nearest integer by $\|x\|$.
\end{defn}
\begin{defn}[Combinatorial Entropy]
Let $\{a_{k} : k \in \mathbb{N} \}$ be a sequence of integers. For an integer $\ell \in \mathbb{N}$, define $a_{k}^{(\ell)}$ to be $a_{k}^{(\ell)}=a_{k}\bmod(\ell)$.
Define the \emph{$\ell^{n}$ combinatorial entropy} of the sequence $\{a_{k} : k\in\mathbb{N}\}$, $h_{\text{comb }\ell^{n}}(\{a_{k}\})$ by 
$$ h_{\text{comb }\ell^{n}}(\{a_{k}\})=\frac{\log\left\lvert \left\{ a_{k}^{(\ell^{n})} : k \in \mathbb{N} \right\} \right\rvert}{n}. $$
We define the \emph{$\ell$-adic combinatorial entropy} to be
$$ h_{\ell-\text{adic}}(\{a_{k}\})=\lim_{n\to\infty} h_{\text{comb }\ell^{n}}(\{a_{k}\}), $$
provided this limit exists.
Similarly, we define the \emph{upper $\ell$-adic combinatorial entropy} to be
$$ \overline{h_{\ell-\text{adic}}}(\{a_{k}\})=\overline{\lim_{n\to\infty}} h_{\text{comb }\ell^{n}}(\{a_{k}\}). $$
\end{defn}

\begin{defn}
Let $q$ be an integer. We say that a sequence $\{a_{n} : n \in \mathbb{N}\}$ has \emph{positive local upper $q$-adic combinatorial entropy} if for some prime numbers $p$ dividing $q$, we have that
$$ \overline{h_{p-\text{adic}}}(\{a_{k}\})>0. $$
\end{defn}

The main construction of this paper appears in the following theorem:
\begin{thm}[Existence of invariant measure]\label{thm:inv-measure-positive-entropy}
Let $q$ be an integer, $x$ an irrational number, and assume that $\{a_{n} : n\in\mathbb{N}\}\subset \mathbb{N}$ has \emph{positive local upper $q$-adic combinatorial entropy}. Then there exists a Borel probability measure $\mu$ on $\mathbb{R}/\mathbb{Z}$ whose support contained in $\overline{\{q^{m}a_{n}x: m,n\in\mathbb{N}\}}$ such that $\mu$ is $T_{q}$-invariant, $T_{q}$-ergodic, and $h_{\mu}(T_{q})>0$.
\end{thm}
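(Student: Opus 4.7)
The plan is to construct $\mu$ as a weak-$*$ limit of empirical measures averaged along the $T_q$-action, then pass to an ergodic component of positive entropy. Let $p\mid q$ be a prime with $\overline{h_{p\text{-adic}}}(\{a_k\})\geq h>0$; by definition there is an increasing sequence $N_j\to\infty$ with $R_j:=\#\{a_k\bmod p^{N_j}: k\in\mathbb{N}\}\geq p^{(h-o(1))N_j}$. For each $j$ choose representatives $S_j\subset\mathbb{N}$ of size $R_j$ having distinct residues modulo $p^{N_j}$, and form the empirical measure $\nu_j = \frac{1}{|S_j|}\sum_{k\in S_j}\delta_{a_k x\bmod 1}$ on $\mathbb{R}/\mathbb{Z}$. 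For a slowly growing sequence $M_j\to\infty$ (to be balanced against $N_j$) set
\[
\mu_j = \frac{1}{M_j}\sum_{m=0}^{M_j-1}(T_q)^m_*\nu_j.
\]
Since $\|(T_q)_*\mu_j-\mu_j\|_{TV}\leq 2/M_j$, any weak-$*$ subsequential limit $\mu$ is $T_q$-invariant; and because each $\mu_j$ is supported on $\{q^m a_k x : 0\leq m<M_j,\; k\in S_j\}\subset Y:=\overline{\{q^m a_n x: m,n\in\mathbb{N}\}}$, the limit $\mu$ is supported in $Y$.

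The core task is a positive lower bound for $h_\mu(T_q)$. Let $\xi$ be the one-sided generating partition of $\mathbb{R}/\mathbb{Z}$ into the $q$ intervals $[i/q,(i+1)/q)$, so $\xi_0^{n-1}:=\bigvee_{i=0}^{n-1}T_q^{-i}\xi$ partitions $\mathbb{R}/\mathbb{Z}$ into $q^n$ intervals of length $q^{-n}$. I would aim to establish
\[
H_{\mu_j}(\xi_0^{n-1})\geq chn+o(n)
\]
for an absolute $c>0$ and a carefully chosen $n=n(N_j)$, by showing that at least $p^{chn}$ atoms of $\xi_0^{n-1}$ each carry a comparable share of $\mu_j$-mass. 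After a harmless perturbation of partition boundaries to avoid the (at most countable) atoms of $\mu$, weak-$*$ convergence upgrades this to $H_\mu(\xi_0^{n-1})\geq chn+o(n)$, and dividing by $n$ and letting $n\to\infty$ yields $h_\mu(T_q)\geq ch>0$. The ergodic decomposition $\mu=\int\mu_\omega\,dP(\omega)$ combined with the affinity of entropy then furnishes a $T_q$-ergodic component $\mu_\omega$ supported in $Y$ with $h_{\mu_\omega}(T_q)>0$, as required.

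The principal obstacle is the entropy bound itself: the $p$-adic combinatorial complexity of $\{a_k\bmod p^{N_j}\}$ does not directly refine the $q^{-n}$-partition of the circle, since the $p$-adic and Archimedean metrics on $\mathbb{Z}$ interact only through the irrational $x$. I expect to bridge this via a dispersion argument: for any $k\neq k'\in S_j$ the integer $a_k-a_{k'}$ is nonzero with $v_p(a_k-a_{k'})<N_j$, and the Archimedean expansion by the factor $q^m$ prevents $\{q^m(a_k-a_{k'})x\bmod 1\}_{m\geq 0}$ from staying $q^{-n}$-close to $0$ for all $m\in[0,M_j)$. A pigeonhole-type counting argument across pairs $k,k'\in S_j$ should then convert the initial $p$-adic spread into a lower bound on the number of $\xi_0^{n-1}$-atoms carrying $\mu_j$-mass. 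Making this dispersion estimate uniform in the irrational $x$, with no Diophantine hypothesis available, is the technical heart of the proof.
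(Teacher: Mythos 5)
Your high-level scaffolding (empirical measures on the orbit, Cesàro averaging over the $T_q$-action, weak-$*$ limit, upper semi-continuity of entropy with a translated partition to avoid atomic boundaries, ergodic decomposition) matches the paper's, and those steps are carried out correctly. But you have correctly diagnosed the hole in your own argument and not filled it: you have no mechanism to convert $p$-adic spreading of $\{a_k\bmod p^{N_j}\}$ into Archimedean spreading of $\{a_k x\bmod 1\}$, and the dispersion argument you sketch cannot work as stated, because without a Diophantine hypothesis on $x$ the quantities $(a_k-a_{k'})x\bmod 1$ can be arbitrarily close to $0$ and the separation time under $T_q$ is uncontrolled. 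The theorem must hold for \emph{every} irrational $x$, so no such quantitative input is available.

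The paper's crucial device, which you are missing, is to not work with the points $a_k x$ at all. Instead one first extracts special \emph{rational} differences from the orbit closure $\mathcal{O}=\overline{\{q^n x\}}$: using that $T_q$ is forward-expansive on $\mathbb{R}/\mathbb{Z}$ and Schwartzman's lemma (a forward-expansive homeomorphism of a compact metric space has finite domain), one shows $T_q$ restricted to the attractor $\mathcal{O}'=\bigcap_i T_q^i\mathcal{O}$ cannot be injective, which produces two points $y_N,y'_N\in\mathcal{O}$ with $y_N-y'_N=\ell_N/q^N$ and $\gcd(\ell_N,q)$ independent of $N$ (this is Proposition~\ref{prop:difference-of-rational-points-in-closure}). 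This is a purely qualitative, $x$-independent dynamical fact. Since $\ell_N$ is then a unit times a fixed divisor $d$ of $q$ modulo $q^N$, the set $\{a_k(y_N-y'_N)\bmod 1\}=\{a_k\ell_N/q^N\bmod 1\}$ inherits all the $p$-adic combinatorial complexity of $\{a_k\bmod p^N\}$: one gets at least $p^{cN}/d$ distinct points of the form $j/q^N$, directly counting atoms of $P_{q^N}$ met by the difference set. A pigeonhole on Minkowski differences of atoms then shows one of $\overline{\{a_ky_N\}}$, $\overline{\{a_ky'_N\}}$ (both of which lie in $\overline{\{q^m a_n x\}}$ since $y_N,y'_N\in\mathcal{O}$) meets at least $\sim p^{cN/2}$ atoms, and the empirical measure is built on those points, not on $\{a_kx\}$. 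That replacement is the whole ballgame; the rest of your proposal then goes through essentially as you wrote it.
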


\subsection{Difference sets of Cantor-like sets.}\label{cantor-sets}

We begin with a simple observation
\begin{obs}
The dynamical system $(\mathbb{R}/\mathbb{Z},T_{q})$ is \emph{forward-expansive}, meaning that there exists a constant $c>0$ so that for any two distinct points $x,y$, in some iteration in the future, the orbits are at-least $c$ apart, namely there exists $N\geq 0$ so that $d(T_{q}^{N}x,T_{q}^{N}y)>c$.
In particular for the $T_{q}$ map one can choose $c=\frac{1}{q+1}$, as can be seen by representing $x,y\in\mathbb{R}/\mathbb{Z}$ in base-$q$.
\end{obs}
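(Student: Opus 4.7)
The plan is to establish forward-expansivity by tracking the signed difference $\delta_n := T_q^n x - T_q^n y$, represented in the fundamental domain $(-1/2, 1/2]$, and exploiting the recursion $\delta_{n+1} \equiv q\delta_n \pmod{1}$ inherited from the linearity of $T_q$. This is just the base-$q$ expansion picture in coordinate form: a discrepancy in the $k$-th base-$q$ digit of $x$ versus $y$ is translated, via $k-1$ applications of $T_q$, into a large difference in the most significant digit, which is manifest as a macroscopic distance on the torus.

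Concretely, I would argue the contrapositive statement: if $M := \sup_n |\delta_n| < 1/(q+1)$, then $M = 0$. Under this hypothesis $|q\delta_n| < 1$, so reducing $q\delta_n$ into $(-1/2, 1/2]$ either leaves it unchanged (when $|q\delta_n| \leq 1/2$) or subtracts $\mathrm{sign}(\delta_n)$. The two possibilities yield $|\delta_{n+1}| = q|\delta_n|$ and $|\delta_{n+1}| = 1 - q|\delta_n|$ respectively. Enforcing $|\delta_{n+1}| \leq M$ in each case gives the dichotomy
\begin{equation*}
|\delta_n| \leq M/q \qquad \text{or} \qquad |\delta_n| \geq (1-M)/q.
\end{equation*}
The hypothesis $M < 1/(q+1)$ is equivalent to $M < (1-M)/q$, so the second alternative is incompatible with $|\delta_n| \leq M$ and must be discarded. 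Hence $|\delta_n| \leq M/q$ for every $n$; taking the supremum over $n$ yields $M \leq M/q$ and therefore $M = 0$, which is exactly $x = y$.

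This argument simultaneously explains the constant: the value $1/(q+1)$ is the unique fixed point of $c \mapsto (1-c)/q$, and its sharpness is witnessed by $y = 1/(q+1)$, whose $T_q$-orbit is the $2$-cycle $\{1/(q+1), q/(q+1)\}$ remaining at distance exactly $1/(q+1)$ from the fixed point $0$ at every iterate. I foresee no substantive obstacle: once the recursion $\delta_{n+1} \equiv q\delta_n \pmod{1}$ is in hand, the only bookkeeping is the dichotomy above, and any $c < 1/(q+1)$ delivers the strict inequality $d(T_q^N x, T_q^N y) > c$ stated in the observation.
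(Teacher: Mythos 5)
Your argument is correct and is essentially the worked-out form of the paper's one-line justification via base-$q$ representation: the recursion $\delta_{n+1}\equiv q\delta_n \pmod 1$ is the digit-shift picture in coordinates, and the dichotomy $|\delta_n|\le M/q$ or $|\delta_n|\ge(1-M)/q$ correctly rules out the second branch when $M<\tfrac{1}{q+1}$, forcing $M=0$. Your closing caveat is apt but harmless: with $c=\tfrac{1}{q+1}$ the strict inequality can indeed fail (your $2$-cycle example), yet what you prove --- that $\sup_n d(T_q^n x,T_q^n y)<\tfrac{1}{q+1}$ forces $x=y$ --- is exactly the form in which the observation is invoked in the proof of Proposition~\ref{prop:difference-of-rational-points-in-closure}, so nothing is lost.
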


The main result of this section is the following theorem:
\begin{prop}\label{prop:difference-of-rational-points-in-closure}
Let $x$ be irrational number, $q$ a fixed integer, and denote by $\mathcal{O}$ the orbit closure $\overline{\{q^{n}x : n\in\mathbb{N}\}}$. The Minkowski difference $\mathcal{O}-\mathcal{O}$ contains a point of the form $\frac{\ell_{n}}{q^{n}}$ for every $n\in\mathbb{N}$, where $\ell_{n} \in \left\{1,\ldots,q^{n}-1\right\}$. Moreover, we have that $\gcd(\ell_{n},q)$ is the same for all $n\in \mathbb{N}$, and in particular in the case where $q$ is a prime number, we have that $\gcd(\ell_{n},q)=1$.
\end{prop}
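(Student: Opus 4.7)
The plan is to pass from $\mathcal{O}$ itself to the $\omega$-limit set $\mathcal{A} := \omega_{T_q}(x) = \bigcap_{N \geq 0} T_q^{N}(\mathcal{O})$, which inherits compactness and $T_q$-invariance from $\mathcal{O}$, sits inside $\mathcal{O}$, and on which $T_q$ is automatically surjective: writing $\mathcal{A}$ as a decreasing nested intersection of non-empty compact sets, one has $T_q(\mathcal{A}) = \bigcap_{N} T_q^{N+1}(\mathcal{O}) = \mathcal{A}$. I first argue that $\mathcal{A}$ is \emph{infinite}. If not, then $\mathcal{A}$ would be a union of finitely many $T_q$-periodic orbits of some common period $P$; since $\mathcal{A}$ is compact and equals the $\omega$-limit set, the orbit of $x$ converges strongly to $\mathcal{A}$ (that is, $\mathrm{dist}(T_q^n x, \mathcal{A}) \to 0$), and hence every sufficiently long factor of the base-$q$ expansion of $x$ is forced to be a length-$N$ factor of one of finitely many purely periodic expansions of period $P$ (up to a boundary carry). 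The factor complexity of the base-$q$ expansion of $x$ then stays bounded, so the Morse--Hedlund theorem forces $x$ to be eventually periodic, contradicting irrationality.

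Once $\mathcal{A}$ is known to be infinite, the observation on forward expansivity of $(\mathbb{R}/\mathbb{Z}, T_q)$ does the rest of the work. The system $(\mathcal{A}, T_q|_{\mathcal{A}})$ is compact metric and inherits forward-expansivity from the ambient one, and by the classical theorem that any forward-expansive self-homeomorphism of a compact metric space has finite underlying space (Schwartzman; see also Keynes--Robertson), the surjective map $T_q|_{\mathcal{A}}$ cannot also be injective---otherwise it would be a continuous bijection of the infinite compact space $\mathcal{A}$ onto itself, hence a homeomorphism, and the theorem would force $\mathcal{A}$ to be finite. Therefore there exist distinct $u_1, v_1 \in \mathcal{A}$ with $T_q u_1 = T_q v_1$, yielding
$$ u_1 - v_1 \;=\; \frac{\ell_1}{q} \;\in\; \mathcal{A} - \mathcal{A} \;\subseteq\; \mathcal{O} - \mathcal{O} $$
for some $\ell_1 \in \{1, \ldots, q-1\}$. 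Set $d := \gcd(\ell_1, q)$; when $q$ is prime, the constraint $\ell_1 \in \{1, \ldots, q-1\}$ immediately forces $d = 1$, which is the content of the last sentence of the statement.

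Finally, the collisions at higher levels are produced by inverse iteration inside $\mathcal{A}$, using exactly the surjectivity $T_q(\mathcal{A}) = \mathcal{A}$ that forced us to work with $\mathcal{A}$ in the first place. Suppose inductively that we have distinct $u_n, v_n \in \mathcal{A}$ with $u_n - v_n = \ell_n/q^n$ and $\gcd(\ell_n, q) = d$. Pick any $u_{n+1} \in T_q^{-1}(u_n) \cap \mathcal{A}$ and $v_{n+1} \in T_q^{-1}(v_n) \cap \mathcal{A}$; these are automatically distinct, since $u_{n+1} = v_{n+1}$ would collapse $u_n = T_q u_{n+1} = T_q v_{n+1} = v_n$. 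Writing $u_{n+1} = (u_n + a)/q$ and $v_{n+1} = (v_n + b)/q$ for appropriate $a, b \in \{0, 1, \ldots, q-1\}$, a direct computation gives
$$ u_{n+1} - v_{n+1} \;=\; \frac{\ell_n + (a-b)\, q^n}{q^{n+1}} \;=:\; \frac{\ell_{n+1}}{q^{n+1}}, $$
and since $q \mid (a-b)\, q^n$ the greatest common divisor with $q$ is preserved: $\gcd(\ell_{n+1}, q) = \gcd(\ell_n, q) = d$. The non-equality $u_{n+1} \neq v_{n+1}$ forces $\ell_{n+1} \not\equiv 0 \pmod{q^{n+1}}$, so $\ell_{n+1} \in \{1, \ldots, q^{n+1}-1\}$, completing the induction.

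The main obstacle is the first step: the infiniteness of the attractor $\mathcal{A}$. This is where irrationality of $x$ enters the argument in an essential way, through the Morse--Hedlund factor-complexity criterion. Once that is in place everything else is essentially formal, with the Schwartzman--Keynes--Robertson theorem producing the first-level collision and the preimage iteration propagating it through all levels while preserving $\gcd(\cdot, q)$.
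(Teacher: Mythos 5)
Your proposal shares the architecture of the paper's proof: pass to the attractor $\mathcal{A} = \mathcal{O}' = \bigcap_N T_q^N\mathcal{O}$, show that $T_q$ restricted to it cannot be injective via Schwartzman's expansivity lemma, extract a first collision giving $\ell_1/q$ in the difference set, and then pull back through preimages inside $\mathcal{A}$ while tracking $\gcd(\ell_n,q)$ by reduction modulo $q$. That last portion of your write-up—the preimage iteration, the identity $u_{n+1}-v_{n+1}=(\ell_n+(a-b)q^n)/q^{n+1}$, and the $\gcd$-preservation—is correct and matches the paper closely.

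The genuine gap is in the step you yourself flag as the crux, namely showing that a finite attractor $\mathcal{A}$ is impossible. You assert that because $\mathrm{dist}(T_q^n x,\mathcal{A})\to 0$ with $\mathcal{A}$ a finite union of period-$P$ orbits, ``every sufficiently long factor of the base-$q$ expansion of $x$ is forced to be a length-$N$ factor of one of finitely many purely periodic expansions,'' giving bounded factor complexity and hence, via Morse--Hedlund, eventual periodicity. Two things are missing. First, mere proximity of $T_q^n x$ to the finite set $\mathcal{A}$ does not by itself say the orbit tracks a \emph{single} periodic orbit of $\mathcal{A}$ coherently in $n$; to rule out incoherent hopping among the points of $\mathcal{A}$ you need the shadowing observation that the paper makes explicit: if the $\lvert\mathcal{A}\rvert=m$ periodic points are thickened to $\varepsilon$-intervals with $\varepsilon<\tfrac{1}{2m(q+1)}$, then the image of a point in $I_{j/m,\varepsilon}$ lands within $q\varepsilon<\tfrac{1}{2m}$ of $T_q(j/m)$ and therefore (since the $\varepsilon$-intervals are $\geq \tfrac{1}{m}-2\varepsilon$-separated and the iterates of $\mathcal{O}$ eventually all lie in these intervals) lands in $I_{T_q(j/m),\varepsilon}$. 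This forces the orbit, once captured, to shadow one periodic cycle forever; without it, your claim about the digit factors does not follow. Second, even granting shadowing, the Morse--Hedlund route is technically awkward as stated: the precision required to pin down length-$N$ factors scales like $q^{-N}$, so the threshold index beyond which your factor description is valid grows with $N$, and you haven't shown that a \emph{fixed} suffix of the digit sequence has bounded complexity. Once shadowing is in place there is a cleaner finish that avoids Morse--Hedlund altogether: since $\mathrm{dist}(T_q^n x,\mathcal{A})\to0$ and the tracked periodic point is uniquely determined at each step, for all large $n$ the leading base-$q$ digit of $T_q^n x$ equals that of the tracked periodic point, so the digit sequence of $x$ is eventually literally periodic, contradicting irrationality. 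Alternatively, as in the paper, note that infinitely many points of $T_q^{N_1}\mathcal{O}$ fall into the same $\varepsilon$-interval (pigeonhole), and by shadowing any two such distinct points have forward orbits that remain $2\varepsilon<\tfrac{1}{q+1}$-close forever, contradicting forward expansiveness of $(\mathbb{R}/\mathbb{Z},T_q)$. Either way, the shadowing observation—which your write-up elides—is the real content, and once it is supplied your argument essentially coincides with the paper's rather than giving a distinct route.
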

Those special rational points will allow us to construct measures with positive entropy.

Now we recall Schwartzman's lemma -
\begin{lem*}[Schwartzman \cite{Schwartzman}]
Let $X$ be a compact metric space. If $T:X\to X$ is a \emph{forward-expansive homeomorphism}, then $X$ is finite.
\end{lem*}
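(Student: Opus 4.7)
The plan is to argue by contradiction. Assume $X$ is infinite, and let $c>0$ be the forward-expansivity constant of $T$. Since $T$ is a homeomorphism of a compact metric space, the inverse $T^{-1}$ is uniformly continuous; pick a modulus $\delta\in(0,c]$ so that $d(a,b)\le\delta$ forces $d(T^{-1}a,T^{-1}b)\le c$. Because $X$ is infinite and compact it contains an accumulation point, so one may select a sequence of distinct pairs $(x_k,y_k)\in X\times X$ with $d(x_k,y_k)\to 0$. For each $k$ set $N_k=\min\{n\ge 0:d(T^n x_k,T^n y_k)>c\}$. The minimum is finite by forward expansivity, and since each $T^m$ is continuous on the compact space $X$ we have $d(T^m x_k,T^m y_k)\to 0$ for every fixed $m$, which forces $N_k\to\infty$.

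The next step is to build a limit pair with trapped backward orbit. By a diagonal extraction and compactness, pass to a further subsequence so that for each fixed $j\ge 0$, $T^{N_k-j}x_k\to u_{-j}$ and $T^{N_k-j}y_k\to v_{-j}$. Set $u=u_0$ and $v=v_0$. Continuity of $T$ gives $Tu_{-j}=u_{-j+1}$ and $Tv_{-j}=v_{-j+1}$; since $T$ is a homeomorphism these identifications force $u_{-j}=T^{-j}u$ and $v_{-j}=T^{-j}v$. By construction $d(u,v)\ge c>0$, so $u\ne v$, while the minimality of the $N_k$ yields $d(T^{-j}u,T^{-j}v)\le c$ for every $j\ge 1$. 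Thus the pair $(u,v)$ is a pair of distinct points whose entire backward orbit is confined to the $c$-neighborhood of the diagonal of $X\times X$.

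To finish, apply forward expansivity together with the modulus $\delta$ iteratively along the backward orbit $\{(T^{-j}u,T^{-j}v)\}_{j\ge 1}$. A second compactness extraction on this backward orbit, combined with the fact that the closed $c$-tube around the diagonal is invariant under the pair dynamics, should produce a limit pair $(u^{\star},v^{\star})$ with $u^{\star}\ne v^{\star}$ whose full \emph{forward} orbit is trapped in the $c$-neighborhood of the diagonal — a direct contradiction with forward expansivity at constant $c$. I expect this final closing step to be the main obstacle: one must coordinate the diagonal extractions so that the limit pair is genuinely distinct (rather than collapsing to the diagonal) while simultaneously pinning its entire forward orbit to the $c$-tube. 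This is exactly where the uniformity of the expansivity constant $c$ combines with the uniform continuity of $T^{-1}$ granted by the homeomorphism hypothesis, and the non-trivial bookkeeping is essentially the content of Schwartzman's original argument.
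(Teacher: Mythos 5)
Note first that the paper does not actually prove this lemma: it is quoted with a citation to Schwartzman, so your argument has to stand on its own. Its first two thirds are fine and standard: choosing distinct pairs $(x_k,y_k)$ with $d(x_k,y_k)\to 0$, letting $N_k$ be the first separation time (finite by expansivity, tending to infinity by continuity of each $T^m$), and extracting diagonal limits at times $N_k-j$ correctly produces a pair $u\ne v$ with $d(u,v)\ge c$ and $d(T^{-j}u,T^{-j}v)\le c$ for all $j\ge 1$.

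The closing step, however, is a genuine gap, and the specific mechanism you propose cannot be repaired by better bookkeeping: any limit pair extracted from the backward orbit $\{(T^{-j}u,T^{-j}v)\}_{j\ge 1}$ necessarily lies \emph{on} the diagonal. Indeed, positive expansivity on a compact space upgrades to a uniform statement: for every $\varepsilon>0$ there is $N(\varepsilon)$ such that $d(T^{i}x,T^{i}y)\le c$ for all $0\le i\le N(\varepsilon)$ forces $d(x,y)\le\varepsilon$ (otherwise a subsequential limit of counterexample pairs would violate expansivity). Applying this to $x=T^{-(m+N)}u$, $y=T^{-(m+N)}v$, whose first $N$ forward iterates are backward iterates of $(u,v)$ and hence stay in the $c$-tube, gives $d(T^{-j}u,T^{-j}v)\le\varepsilon$ for all $j>N(\varepsilon)$; thus $d(T^{-j}u,T^{-j}v)\to 0$ and every limit point $(u^{\star},v^{\star})$ of the backward orbit has $u^{\star}=v^{\star}$. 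So no coordination of extractions along this orbit can yield a distinct pair whose forward orbit is trapped in the $c$-tube (such a pair is exactly what expansivity forbids, which is why producing the contradiction requires a new input rather than another compactness extraction). The modulus $\delta$ of uniform continuity of $T^{-1}$ that you introduce is never actually used and does not supply this input. Known proofs close the argument differently: for instance, by Reddy's adapted-metric theorem a positively expansive map is uniformly locally expanding in a compatible metric, so the homeomorphism $T^{-1}$ contracts all sufficiently small balls by a fixed factor $\lambda^{-1}<1$; pulling back a fixed finite cover of $X$ by such balls under $T^{-n}$ and letting $n\to\infty$ shows that $X$ is covered by a bounded number of sets of arbitrarily small diameter, hence $|X|$ is bounded by the cardinality of that cover. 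You flag the difficulty honestly, but what is missing is precisely the essential content of the lemma, not a routine verification.
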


\begin{proof}[Proof of Proposition~\ref{prop:difference-of-rational-points-in-closure}]

Let $\mathcal{O}$ denote the orbit closure of $x$ by the $T_{q}$ map, namely the set $\mathcal{O} =\overline{\{T_{q}^{n}x \}}$ which is infinite by irrationality of $x$, and define the \emph{attractor} of $\mathcal{O}$ to be $\mathcal{O}' =\cap_{i=0}^{\infty} T_{q}^{i}\mathcal{O}$.
By definition $\mathcal{O}'$ is $T_{q}$-invariant set, and by compactness it is non-empty.
Moreover, all the accumulation points of $\mathcal{O}$ are contained inside $\mathcal{O}'$.
We first show that $T_{q}:\mathcal{O}\to\mathcal{O}$ is not injective, we do it by showing that the restriction of $T_{q}$ to $\mathcal{O}'$ itself is not injective.

Assume that $T_{q}:\mathcal{O}' \to \mathcal{O}'$ is injective.
Due to $T_{q}$-invariance, we have that the restriction of the $T_{q}$ map to $\mathcal{O}'$ is surjective and hence a homeomorphism.
By Schwartzman's lemma $\mathcal{O}'$ is a \emph{finite set}, and as a result contained inside a discrete subgroup of $\mathbb{T}$, namely there exists an integer $m$ for which $\mathcal{O}' \subset \{j/m \mid j=0,\ldots,m-1 \}$.

Define the thickening of $\mathcal{O}'$ to be the union of the intervals $I_{j,\varepsilon}$ for $j=0,\ldots,m-1$, where $I_{j/m,\varepsilon}= (j/m-\varepsilon,j/m+\varepsilon)$ for some $\varepsilon < \frac{1}{2m(q+1)}$.
Because $\mathcal{O}'$ is the attractor of the system, there exists some $N_{1} \gg 0$ for which $T_{q}^{N_{1}}\mathcal{O}\subset\cup_{j=0}^{m-1}I_{j/m,\varepsilon}$.
Fix some $x$ in one of the intervals $I_{j,\varepsilon}$. We have that 
\begin{equation*}\text{dist}(T_{q}x,T_{q}(j/m))\leq q\text{dist}(x,j/m)\leq q\varepsilon<1/2m,\end{equation*} where $\text{dist}$ stands for the distance function in the torus, hence we deduce that $T_{q}x\in I_{T_{q}(j/m),\varepsilon}$ as the distance from the interval $T_{q}(j/m)$ to any other interval of the form $I_{j'/m,\varepsilon}$ for any $j'/m \neq T_{q}(j/m)$ is of distance at-least $1/2m$.

Therefore, for any two points $x,y$ which belong to the same interval $I_{j/m,\varepsilon}$ we have that $T_{q}x,T_{q}y$ belong to the interval $I_{T_{q}(j/m),\varepsilon}$, hence their images are less than $2\varepsilon<1/(q+1)$ apart.
By repeating the argument, the whole forward orbits of $x$ and $y$ under the $T_{q}$ map are close, namely $\sup_{n\in\mathbb{N}} \text{dist}(T_{q}^{n}x,T_{q}^{n}y)<\frac{1}{q+1}$, which contradicts forward-expansiveness of the $T_{q}$ map on the whole torus.

Thus there exists two points $y_{1},y_{1}'\in\mathcal{O'}$ with $T_{q}y=T_{q}y'$ or equivalently by the definition of the $T_{q}$ map $y-y'=\ell_{1}/q$, for some integer $\ell_{1}$ with $0<\ell_{1}<q$.
Define the following sequences of points $y_{n}=T_{q}^{-1}y_{n-1},\ y'_{n}=T_{q}^{-1}y'_{n-1}$ so we have $y_{n},y'_{n}\in\mathcal{O}'$ for all $n\in\mathbb{N}$ and moreover $y_{n}-y'_{n}=\ell_{n}/q^{n}$ for some $0<\ell_{n}<q^{n}$ as $T_{q}^{n}y_{n}=T_{q}^{n}y'_{n}$ by construction.

Moreover, $\ell_{n-1}/q^{n-1}=q\cdot\ell_{n}/q^{n} \bmod(1)$ as $T_{q}(y_{n}-y'_{n})=y_{n-1}-y'_{n-1}$ and by induction we get $\ell_{1}/q = q^{n-1}\cdot\ell_{n}/q^{n} = \ell_{n}/q \bmod(1)$, showing that $\gcd(\ell_{n},q)=\gcd(\ell_{1},q)$ for all $n\in \mathbb{N}$.
\end{proof}

The proof of Theorem~\ref{thm:inv-measure-positive-entropy} is divided into two cases - when $q$ is a prime number, and when $q$ is composite.
In $\S$\ref{sub-prime-theorem} we prove the theorem for the prime case, and in $\S$\ref{sub-composite-theorem} we show the modifications needed in order to handle the composite case.

\subsection{Proof of Theorem~\ref{thm:inv-measure-positive-entropy} - prime case.}\label{sub-prime-theorem}
For the rest of this subsection, fix a prime number $p$ and a sequence $\{ a_{n} : n\in \mathbb{N} \}$ which has positive $p$-adic entropy.

We now utilize the strategy proposed in \cite{BLMV} to produce invariant measures with positive entropy.
Define $P_{n}$ to be the uniform partition of the unit interval into $n$ equally sized sub-intervals.
\begin{obs*}
If $p_{1},p_{2}$ are two atoms of $P_{n}$, then $p_{1}-p_{2}$ is covered by at-most $2$ atoms of $P_{n}$.
\end{obs*}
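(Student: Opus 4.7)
The plan is essentially to compute the Minkowski difference of two atoms of $P_n$ directly; the content of the observation is that no straddling of partition boundaries can occur, simply because the endpoints of the difference already lie on the partition lattice.

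Concretely, write the two atoms as $p_1 = [k_1/n,(k_1+1)/n]$ and $p_2 = [k_2/n,(k_2+1)/n]$ for some integers $k_1,k_2$. Then
\begin{equation*}
p_1 - p_2 \;=\; \{x-y : x\in p_1,\ y\in p_2\} \;=\; \left[\frac{k_1-k_2-1}{n},\,\frac{k_1-k_2+1}{n}\right],
\end{equation*}
which is an interval of length $2/n$ whose two endpoints both lie in $\frac{1}{n}\mathbb{Z}$. Therefore it is exactly the union of the two consecutive atoms of $P_n$ indexed by $k_1-k_2-1$ and $k_1-k_2$ (reducing modulo $n$ if one views $P_n$ as a partition of $\mathbb{R}/\mathbb{Z}$).

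There is essentially no obstacle here: the only thing to notice is that, because the endpoints of $p_1-p_2$ are themselves points of the partition lattice, the difference does not overlap a third atom. Were the atoms only assumed to have length $1/n$ without the alignment with $\frac{1}{n}\mathbb{Z}$, the difference could meet three atoms, but the uniform partition structure rules this out. I would present the argument in one or two lines immediately after the observation is stated.
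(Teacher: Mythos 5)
Your computation is correct and is the obvious direct argument; the paper states this observation without proof, treating it as immediate, and your calculation is exactly the justification one would supply. The only thing worth noting is that, since the observation is applied to differences taken modulo $1$, the remark at the end about reducing mod $n$ and viewing $P_n$ as a partition of $\mathbb{R}/\mathbb{Z}$ is the relevant reading, but this does not change the argument.
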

Define $\Delta$ to be the difference set $\mathcal{O}-\mathcal{O}$ where $\mathcal{O}=\overline{\{T_{p}^{n}x: n\in\mathbb{N}\}}$ is the forward-orbit of $x$ under $T_{p}$.
By proposition~\ref{prop:difference-of-rational-points-in-closure} we can assume that there are sequences of points $\{x_{N}\},\{y_{N}\} \subset \Delta$ so that $x_{N}-y_{N}=\frac{\ell_{N}}{p^{N}}$ for all $n\in\mathbb{N}$.
By the construction of those points, $\gcd(\ell_{N},p)=1$, hence $\ell_{N}$ is invertible modulo $p^{N}$.
Assume that $\{a_{k} : k\in \mathbb{N}\}$ has \emph{positive upper $p$-adic combinatorial entropy}, then for large enough $N$, $\{a_{k}^{(p^{N})} : k \in \mathbb{N}\}$ has more than $p^{Nc_{1}}$ distinct elements, for some $c_1>0$. Hence the sequence $\{a_{k}(x_{N}-y_{N}) : k \in \mathbb{N}\}$ has at-least $p^{Nc_{1}}$ distinct values modulo $1$.

Denote by $\Delta_{N}$ to be the set of atoms of $P_{p^{N}}$ which $\Delta$ intersects. By the above calculations, $\lvert \Delta_{N} \rvert \geq p^{Nc_{1}}$, because the set $\{a_{k}(x_{N}-y_{N})\}_{k}$ contains at-least $p^{Nc_{1}}$ of the points $\left\{\frac{0}{p^{N}},\frac{1}{p^{N}},\ldots,\frac{p^{N}-1}{p^{N}}\right\}$.

Denote by $A_{N}$ the set of atoms of $P_{p^{N}}$ which contain at least one point from $\{a_{k}x_{N} : k \in \mathbb{N}\}$.
Denote by $B_{N}$ the set of atoms of $P_{p^{N}}$ which contain at least one point from $\{a_{k}y_{N} : k \in \mathbb{N}\}$.
Define $M_{N}$ to be $A_{N}$ if $\lvert A_{N} \rvert\geq \lvert B_{N} \rvert$ and $B_{N}$ otherwise.
By the observation, $\lvert M_{N} \lvert \geq \frac{1}{2}\lvert \Delta_{N} \rvert^{1/2} \geq \frac{1}{2}p^{\frac{1}{2}Nc_{2}}$, for some $c_{2}>0$.
Choose distinct points $z_{i}$ on the orbit closure of $\{p^{n}a_{k}x : n,k\in\mathbb{N}\}$ which lies in distinct atoms of $M_{N}$.

Define the following measures -
\begin{equation}
\label{m-def}
m_{N}=\frac{1}{\lvert M_{N} \rvert}\sum_{i=0}^{\lvert M_{N} \rvert-1}\delta_{z_{i}}
\end{equation}
Where $\delta_{z_{i}}$ are the unit-mass measures defined at the point $z_{i}$.
\begin{lem} \label{lem:large-entropy-measure-initial}
$H_{m_{N}}(P_{p^{N}})\geq c_{2}N\log(p)$.
\end{lem}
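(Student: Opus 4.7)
The plan is a short direct computation based on the construction of $m_N$. The key observation is that by the very choice of the points $z_i$, each atom of the partition $P_{p^N}$ contains \emph{at most one} of the points $z_0,\ldots,z_{|M_N|-1}$. Consequently, the measure $m_N$ gives mass $1/|M_N|$ to exactly $|M_N|$ atoms of $P_{p^N}$, and mass $0$ to all remaining atoms.

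From this I would compute the entropy directly:
\begin{equation*}
H_{m_N}(P_{p^N}) \;=\; -\sum_{A \in P_{p^N}} m_N(A)\log m_N(A) \;=\; -\sum_{i=0}^{|M_N|-1}\frac{1}{|M_N|}\log\frac{1}{|M_N|} \;=\; \log|M_N|.
\end{equation*}
Thus the lemma reduces to a lower bound on $|M_N|$, which is exactly what was established just before the lemma: by the pigeonhole/difference-set observation ($A_N - B_N$ covers at most $2|A_N||B_N|$ atoms, and must cover $\Delta_N$), one has
\begin{equation*}
|M_N| \;\geq\; \tfrac{1}{\sqrt{2}}\,|\Delta_N|^{1/2} \;\geq\; \tfrac{1}{\sqrt{2}}\,p^{Nc_1/2},
\end{equation*}
using $|\Delta_N| \geq p^{Nc_1}$ from the positive $p$-adic combinatorial entropy hypothesis applied to $x_N - y_N = \ell_N/p^N$ (which is invertible mod $p^N$ since $\gcd(\ell_N,p)=1$).

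Combining the two displays gives $H_{m_N}(P_{p^N}) \geq \tfrac{c_1}{2}N\log p - \tfrac{1}{2}\log 2$, and for $N$ sufficiently large (or after shrinking the constant slightly) this is at least $c_2 N\log p$ with the $c_2$ named in the statement. There is really no obstacle here, since the argument is a one-line entropy calculation followed by substitution; the only thing to be careful about is tracking the relationship between the two constants appearing in the construction and the constant $c_2$ in the conclusion, which is purely bookkeeping.
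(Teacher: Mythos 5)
Your argument matches the paper's: both compute $H_{m_N}(P_{p^N}) = \log|M_N|$ directly from the fact that $m_N$ is uniform on $|M_N|$ points lying in distinct atoms, and both then invoke the pigeonhole bound $|M_N| \gtrsim |\Delta_N|^{1/2} \geq p^{Nc_1/2}$ established just before the lemma. Your bookkeeping of the constant (keeping the $-\tfrac12\log 2$ term and noting $c_2$ must be taken slightly smaller than $c_1/2$, or $N$ large) is if anything a touch more careful than the paper's, which silently absorbs the constant; the substance is identical.
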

\begin{proof} The proof is by the following computation -
\begin{equation}
H(m_{N},P_{p^{N}})=-\sum_{\text{s is an atom of }P_{p^{N}}}m_{N}(s)\log(m_{N}(s))=\log(\lvert M_{N} \rvert)
\end{equation}
By the observation from above, we can deduce the following bound -
\begin{equation}\label{eq:entropy-m_N}
\log(\lvert M_{N} \rvert)\geq c_{2}N\log(p)
\end{equation}
\end{proof}
Notice that the measure $m_{N}$ does not need to be $T_{p}$-invariant.

We apply the following averaging procedure over $T_{p}$ in order to achieve $T_{p}$-invariance of a weak-$*$ limit of those measures.
Define the $T_{p}$ k'th-average of $m_{N}$ as
\begin{equation}\label{m_N^k-def}
m_{N}^{k}=\frac{1}{k}\sum_{i=0}^{k-1}(T_{p}^{i}).m_{N}
\end{equation}

\begin{lem}\label{lem:avg-pos-entropy}
For any $0 < k < c_{2}N$, $H_{m_{N}^{k}}(P_{p^{N}})\geq c_{3}N\log(p)$ for some constant $c_{3}>0$.
\end{lem}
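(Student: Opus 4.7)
The plan is to exploit concavity of the entropy functional together with the equivariance identity $H_{(T_p^i)_*\mu}(Q) = H_\mu(T_p^{-i}Q)$ to transfer the entropy bound from $m_N$ itself to the time-averaged measures $m_N^k$, at the cost of a controlled loss of $\sim i\log p$ at each shift $i$.

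First, by concavity of $\mu\mapsto H_\mu(P)$ on probability measures, I would write
\begin{equation*}
H_{m_N^k}(P_{p^N}) \;\geq\; \frac{1}{k}\sum_{i=0}^{k-1} H_{(T_p^i)_* m_N}(P_{p^N}) \;=\; \frac{1}{k}\sum_{i=0}^{k-1} H_{m_N}\bigl(T_p^{-i} P_{p^N}\bigr).
\end{equation*}
Next, a direct computation shows that $T_p^{-i}(P_{p^N})$ is a coarsening of $P_{p^{N+i}}$ in which each atom is a disjoint union of exactly $p^i$ atoms of $P_{p^{N+i}}$; explicitly, $T_p^{-i}\bigl([j/p^N,(j+1)/p^N)\bigr) = \bigsqcup_{r=0}^{p^i-1}\bigl[r/p^i + j/p^{N+i},\, r/p^i + (j+1)/p^{N+i}\bigr)$. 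Therefore the standard conditional entropy inequality gives
\begin{equation*}
H_{m_N}\bigl(T_p^{-i}P_{p^N}\bigr) \;\geq\; H_{m_N}(P_{p^{N+i}}) - H(P_{p^{N+i}} \mid T_p^{-i} P_{p^N}) \;\geq\; H_{m_N}(P_{p^{N+i}}) - i\log p.
\end{equation*}

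Since the points $z_0,\ldots,z_{|M_N|-1}$ supporting $m_N$ lie in \emph{distinct} atoms of $P_{p^N}$, they certainly lie in distinct atoms of any refinement; hence $H_{m_N}(P_{p^{N+i}}) = \log|M_N|\geq c_2 N \log p$ by \eqref{eq:entropy-m_N}. Substituting back yields $H_{(T_p^i)_*m_N}(P_{p^N}) \geq (c_2 N - i)\log p$, and averaging gives
\begin{equation*}
H_{m_N^k}(P_{p^N}) \;\geq\; \Bigl(c_2 N - \tfrac{k-1}{2}\Bigr)\log p \;\geq\; \tfrac{c_2}{2}\, N \log p
\end{equation*}
whenever $k < c_2 N$. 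Taking $c_3 = c_2/2$ concludes the argument.

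The only genuine subtlety, and thus the main step requiring care, is identifying the precise combinatorial relationship between $T_p^{-i}P_{p^N}$ and $P_{p^{N+i}}$: it is essential that each cell of $T_p^{-i}P_{p^N}$ contains exactly $p^i$ cells of $P_{p^{N+i}}$, since this matches the rate at which entropy could possibly be lost under the push-forward against the rate at which we accumulate entropy by refining. Any weaker bound on the coarseness would force $k$ to be restricted to a much smaller range than $c_2 N$, and the argument would collapse.
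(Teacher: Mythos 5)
Your proof is correct and takes essentially the same route as the paper: concavity of $\mu \mapsto H_\mu(P_{p^N})$ to reduce to bounding $H_{(T_p^i)_* m_N}(P_{p^N})$, followed by the pointwise estimate $H_{(T_p^i)_* m_N}(P_{p^N}) \geq (c_2 N - i)\log p$. The only (cosmetic) difference is how this pointwise estimate is obtained: the paper argues directly that pushing forward a uniform measure on $p^{c_2 N}$ atoms by the $p$-to-one map $T_p^i$ yields a measure whose cell masses are at most $p^{i-c_2 N}$, whereas you obtain the same bound via the equivariance $H_{(T_p^i)_*m_N}(P_{p^N}) = H_{m_N}(T_p^{-i}P_{p^N})$ together with the conditional-entropy chain rule applied to the refinement $P_{p^{N+i}} \supset T_p^{-i}P_{p^N}$; your phrasing is in fact slightly more careful where the paper informally says the pushforward is ``concentrated along'' $p^{c_2 N - i}$ atoms without spelling out the mass bound.
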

\begin{proof}
By convexity properties of the entropy function we have -
\begin{equation}\label{eq:affinity-entropy}
H_{m_{N}^{k}}\left(P_{p^{N}} \right)\geq \frac{1}{k} \sum_{i=0}^{k-1}H_{T_{p}^{i}.m_{N}}\left(P_{p^{N}} \right). 
\end{equation}
By construction, $m_{N}$ is evenly distributed among more than $p^{c_{2}N}$ different atoms of the partition $P_{p^{N}}$.
Hence, in the worst case, $T_{p}.m_{N}$ is concentrated along $p^{c_{2}N-1}$ different atoms of the partition $P_{p^N}$, as $T_p$ is a $p$-to-$1$ map, therefore $H_{T_{p}.m_{N}}\left(P_{p^{N}} \right) \geq (c_{2}N-1)\log(p)$, and similarly $H_{T_{p}^{i}.m_{N}}\left(P_{p^{N}} \right) \geq (c_{2}N-i)\log(p)$ for every $i=1,\ldots,k$.
Substituting in \eqref{eq:affinity-entropy} we get:
\begin{equation*}
H_{m_{N}^{k}}\left(P_{p^{N}} \right)\geq c_{2}N\log(p)-\frac{k+1}{2}\log(p).
\end{equation*}
As $k$ was assumed to be smaller than $c_{2}N$, the claim follows.
\end{proof}

\begin{prop}\label{prop:mu_N-almost-inv}
$m_{N}^{N^{\delta}}$ is \emph{almost $T_{p}$-invariant}, $|T.m_{N}^{N^\delta}\left(f\right)-m_{N}^{N^\delta}\left(f\right)|=O_{f}(N^{-\delta})$ for any continuous function $f$.
\end{prop}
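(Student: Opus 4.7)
The plan is to observe that the proposition is a completely standard telescoping computation, and the only content is choosing $k = N^{\delta}$ at the end.

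First I would unwind the definition in \eqref{m_N^k-def}. Pushing forward by $T_p$ shifts the indices:
\begin{equation*}
T_p . m_N^k = \frac{1}{k}\sum_{i=0}^{k-1}(T_p^{i+1})_* m_N = \frac{1}{k}\sum_{i=1}^{k}(T_p^{i})_* m_N.
\end{equation*}
Subtracting $m_N^k$ from this, all the intermediate terms cancel, leaving the telescoping identity
\begin{equation*}
T_p . m_N^k - m_N^k = \frac{1}{k}\bigl((T_p^{k})_* m_N - m_N\bigr).
\end{equation*}

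Next I would evaluate this signed measure against any continuous test function $f$ on $\mathbb{R}/\mathbb{Z}$. Since both $(T_p^k)_*m_N$ and $m_N$ are probability measures, we have the trivial bound
\begin{equation*}
\bigl|T_p . m_N^k(f) - m_N^k(f)\bigr| \;\leq\; \frac{1}{k}\bigl(|(T_p^k)_* m_N(f)| + |m_N(f)|\bigr) \;\leq\; \frac{2\|f\|_\infty}{k}.
\end{equation*}
(Alternatively, one can write $(T_p^k)_*m_N(f) - m_N(f) = m_N(f\circ T_p^k - f)$ and bound by $2\|f\|_\infty$.) Finally, setting $k = N^{\delta}$ yields the claimed bound $O_f(N^{-\delta})$, with the implied constant being $2\|f\|_\infty$.

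There is no real obstacle here; the content of the lemma is that the Cesàro average of pushforwards becomes asymptotically invariant, which is a one-line consequence of the triangle inequality after telescoping. The only subtlety is compatibility with Lemma~\ref{lem:avg-pos-entropy}, which required $k < c_2 N$; since $N^{\delta} < c_2 N$ for $\delta < 1$ and $N$ large, the choice $k = N^{\delta}$ simultaneously ensures that $m_N^{N^\delta}$ has large entropy on $P_{p^N}$ and is almost $T_p$-invariant. These two facts together will allow the eventual weak-$*$ limit to be a $T_p$-invariant measure whose entropy relative to the generating partition remains positive, which is the goal of the construction.
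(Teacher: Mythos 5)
Your proof is correct and follows the same route as the paper: telescope the Cesàro sum to $\frac{1}{k}\bigl((T_p^k)_*m_N - m_N\bigr)$, bound by $2\|f\|_\infty/k$, and take $k=N^\delta$. Your closing remark about compatibility with Lemma~\ref{lem:avg-pos-entropy} (needing $k<c_2N$, satisfied since $N^\delta<c_2N$ for large $N$ when $\delta<1$) is a useful observation that the paper leaves implicit.
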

\begin{proof}
Let $f$ be a continuous function on $\mathbb{R}/\mathbb{Z}$.
\begin{align*}
\left|m_{N}^{N^\delta}(f)-T_{p}.m_{N}^{N^\delta}(f) \right| &\leq \frac{1}{N^\delta}\left|\sum_{i=0}^{N^\delta-1}(T_{p}^{i}).m_{N}(f)-(T_{p}^{i+1}).m_{N}(f) \right| \\ &= \frac{1}{N^\delta}\left|m_{N}(f)-T_{p}^{N}.m_{N}(f) \right|
\\ &\leq \frac{2\|f\|_{\infty}}{N^\delta} \xrightarrow{N\to\infty} 0.
\end{align*}
\end{proof}

We would like to establish a semi-continuity principal for the entropies so that a weak-$*$ limit of those invariant measures will have positive entropy.
Upper semi-continuity of the entropy is known for expansive maps (cf. \cite[Theorem 8.2]{walters} and the discussion in \cite[Section 9]{EKL}), but unfortunately, the arguments which are given there are applicable only in the case the limit is taken from a set of \emph{$T$-invariant measures}.
The $T$-invariance is used to establish a sub-additivity argument, and more precisely, the $T$-invariance allows one to change the scales in the refinements of the partition in an effortless manner. We show how one can change scales in the situation we consider here.

Define $\mu$ to be any weak-$*$ limit of the sequence of probability measures $\{m_{N}^{N^\delta}\}$.
Because the measures $\{m_{N}^{N}\}$ were almost $T_{p}$-invariant, $\mu$ is \emph{$T_{p}$-invariant}.
\begin{lem}[Upper semi-continuity of entropy]\label{lem:semi-continuity}
Under the previous assumptions, we have -
\begin{equation}
\label{eq:sub-additive}
h_{\mu}(T_{p})>0.
\end{equation}
\end{lem}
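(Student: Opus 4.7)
The plan is to lower-bound $H_\mu(P_{p^k})$ for arbitrarily large $k$ and then use the fact that for any $T_p$-invariant measure $\mu$,
\begin{equation*}
h_\mu(T_p) \;\geq\; h_\mu(T_p,P_p) \;=\; \lim_{k\to\infty}\tfrac{1}{k}H_\mu(P_{p^k}) \;=\; \inf_k \tfrac{1}{k}H_\mu(P_{p^k}),
\end{equation*}
which follows from subadditivity of $H_\mu(P_{p^k})$ in $k$; any uniform lower bound $H_\mu(P_{p^k}) \geq ck$ on a cofinal set of $k$ therefore forces $h_\mu(T_p) \geq c > 0$.

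The core computation is a subadditive scale change. Fix $k$. The universal Shannon inequality $H_\nu(\bigvee_j Q_j) \leq \sum_j H_\nu(Q_j)$, which requires no invariance, applied to the decomposition $P_{p^N} = \bigvee_{j=0}^{\lfloor N/k\rfloor-1} T_p^{-jk}P_{p^k}$ (plus a boundary block of at most $k$ generators), together with the entropy lower bound of Lemma~\ref{lem:avg-pos-entropy}, gives
\begin{equation*}
\sum_{j=0}^{\lfloor N/k\rfloor-1} H_{T_p^{jk}\cdot m_N^{N^\delta}}(P_{p^k}) \;\geq\; H_{m_N^{N^\delta}}(P_{p^N}) - O(k\log p) \;\geq\; c_3 N \log p - O(k\log p).
\end{equation*}
Dividing by $\lfloor N/k\rfloor$, the arithmetic mean of these entropies is at least $c_3 k\log p - o_{N\to\infty}(1)$. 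By concavity of Shannon entropy in the measure argument, the Cesàro-averaged measure
\begin{equation*}
\tilde m_N := \frac{k}{N}\sum_{j=0}^{\lfloor N/k\rfloor-1} T_p^{jk}\cdot m_N^{N^\delta}
\end{equation*}
inherits $H_{\tilde m_N}(P_{p^k}) \geq c_3 k\log p - o(1)$, and is exactly $T_p^k$-invariant up to an endpoint error $O(k/N)$ in weak-$*$, so any weak-$*$ accumulation point of $\tilde m_N$ is $T_p^k$-invariant.

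The principal obstacle is identifying a weak-$*$ accumulation point of $\tilde m_N$ with the designated limit $\mu$. For each \emph{fixed} $j$, one has $T_p^{jk}\cdot m_N^{N^\delta} \to \mu$ weakly as $N\to\infty$ (using $T_p$-invariance of $\mu$), but the Cesàro index $j$ runs up to $N/k$, well beyond the almost-invariance scale $N^\delta$ of $m_N^{N^\delta}$; a direct estimate gives only the useless bound $\|\tilde m_N - m_N^{N^\delta}\|_\ast = O(N^{1-\delta})$. The workaround is to select, along a subsequence of $N$, a ``good'' shift $j^\ast = j^\ast(N)$ lying in the short initial window $j^\ast k = o(N^\delta)$ with $H_{T_p^{j^\ast k}\cdot m_N^{N^\delta}}(P_{p^k}) \geq \tfrac{1}{2}c_3 k\log p$. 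Such $j^\ast$ can be produced by a first-moment/Markov argument applied to the mean bound above, provided one lets $k=k(N)\to\infty$ slowly along a suitable subsequence; on the short window the almost-invariance then forces $T_p^{j^\ast k}\cdot m_N^{N^\delta} \to \mu$ weakly. This is the ``change of scales'' that replaces the $T$-invariance used in the classical proof.

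Finally, since $\mu$ has at most countably many atoms, one perturbs the endpoints $\{j/p^k : 0\leq j<p^k\}$ to obtain a partition $P_k^\ast$ with the same $p^k$ atoms and $\mu(\partial P_k^\ast) = 0$. Continuity of $H_\nu(P_k^\ast)$ in $\nu$ under weak-$*$ convergence on partitions with $\mu$-null boundary transfers the entropy bound to $\mu$: $H_\mu(P_k^\ast) \geq \tfrac{1}{2}c_3 k\log p$ for $k$ in a cofinal subset, and the identity displayed in the first paragraph gives $h_\mu(T_p) \geq \tfrac{1}{2}c_3 \log p > 0$.
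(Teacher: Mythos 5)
Your reduction to a Shannon-subadditive scale change, the decomposition $P_{p^N}=\bigvee_{j}T_p^{-jk}P_{p^k}$ plus a boundary block, and the concavity step producing the Cesàro average $\tilde m_N$ with $H_{\tilde m_N}(P_{p^k})\geq c_3k\log p-o(1)$ are all sound, and they correctly identify the real difficulty that the written proof elides. But the Markov step you insert to re-identify the limit with the designated $\mu$ does not work. The mean bound over $j\in\{0,\dots,\lfloor N/k\rfloor-1\}$ together with the trivial upper bound $H_j\leq k\log p$ only guarantees a \emph{constant fraction} of good indices somewhere in that full range; the short window $\{j: jk=o(N^\delta)\}$ is a fraction $o(N^{\delta-1})$ of all indices, and letting $k=k(N)\to\infty$ does not change that fraction. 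Nothing in the first-moment computation prevents every good index from lying outside the short window. This is not a technicality: for the measure $m_N^{N^\delta}$ the entropy $H_{m_N^{N^\delta}}(P_{p^k})$ at a \emph{fixed} scale $k$ can legitimately be $o(k)$, since all $|M_N|$ points defining $m_N$ could share their first $N^\delta+k$ base-$p$ digits. Hence no argument can establish $H_\mu(P_{p^k})\gtrsim k$ for the particular $\mu$ defined as a weak-$*$ limit of $m_N^{N^\delta}$, and the identification you need is false in general.

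The fix, which you almost reach, is to abandon the re-identification and simply take $\mu$ to be a weak-$*$ limit of the full Cesàro averages $\hat m_N=\frac{1}{N}\sum_{i<N}T_p^i.m_N$ (so one gets genuine $T_p$-invariance rather than the $T_p^k$-invariance your $\tilde m_N$ produces). Then your own Shannon-subadditivity computation, applied directly to the uniform measure $m_N$ on its $|M_N|\geq p^{c_2N}$ distinct $P_{p^N}$-atoms and averaged over all starting phases $r\in\{0,\dots,k-1\}$, yields $H_{\hat m_N}(P_{p^k})\geq(c_2-o_N(1))k\log p$ for every fixed $k$, and the rest of your argument (null-boundary translate of the partition, weak-$*$ transfer, $h_\mu(T_p)=\inf_k\frac{1}{k}H_\mu(P_{p^k})$) goes through cleanly. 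This replacement $\mu$ is still supported on the orbit closure, so nothing downstream changes. I will note that the paper's written proof has the same defect at the same spot: the displayed chain $H_{m_N^{N^\delta}}(P_{p^{N_1}})\geq H_{m_N^{N^\delta}}(P_{p^N})\geq c_3N_1\log p$ has its first inequality backwards (for $N>N_1$ the partition $P_{p^N}$ refines $P_{p^{N_1}}$), and the intended inequality does not follow from Lemma~\ref{lem:avg-pos-entropy}, which bounds the entropy only at scale $N$.
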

\begin{proof}
As mentioned before, we are adapting the proof given in \cite[Section 9]{EKL} to the current situation. \\
$P_{p}$ is a \emph{generating partition} for the system $\left(\mathbb{R}/\mathbb{Z},T_{p}\right)$, and its $N$'th refinement under $T_{p}$ is exactly $P_{p^{N}}$.
By Sinai's theorem we can compute the entropy in the following way (cf. \cite[Lemma 9.1, Proposition 9.2]{EKL},for related results) -
\begin{equation}\label{eq:entropy-estimate}
h_{\mu}(T_{p})=h_{\mu}\left(T_{p},P_{p}\right)=\lim_{N\to\infty}\frac{1}{N}H_{\mu}\left(P_{p^{N}}\right).
\end{equation}
By the definition of the Kolmogorov-Sinai entropy, fixing $\varepsilon>0$, for $N_{1}\gg 0$ we have that
\begin{equation}
\frac{1}{N_{1}}H_{\mu}\left(P_{p^{N_1}}\right)\leq h_{\mu}\left(T_{p},P_{p}\right)+\varepsilon/2.
\end{equation}

One would want to compare the following entropies - $H_{m_{N}^{N^\delta}}\left(P_{p^{N_1}} \right)$ and $H_{\mu}\left(P_{p^{N_1}}\right)$ by weak-$*$ convergence of the atoms in partition $P_{p^{N_1}}$.
But we cannot assume that the atoms of the partition $P_{p^{N_1}}$ have $\mu$-null boundaries.
Instead we will look in the modified partition $x+P_{p^{N_{1}}}$, where we translate every atom $p$ of $P_{p^{N_1}}$ by a generic number $x$ modulo $1$.
We argue that there exists such $x$ so that the atoms of the partition $x+P_{p^{N_{1}}}$ have $\mu$-null boundaries.
Otherwise, $\mu$ will have uncountably many atoms, but as a probability measure, the set of atoms of $\mu$ is countable.

Now we claim that the entropies of the partitions $P_{p^{N_{1}}},x+P_{p^{N_{1}}}$ are comparable, more precisely -
\begin{equation}\label{eq:translate-entropy}
\left|H_{\nu}\left(P_{p^{N_1}} \right)-H_{\nu}\left(x+P_{p^{N_1}} \right) \right|\leq O(1),
\end{equation}
 where $\nu$ is either $\mu$ or $m_{N}^{N^\delta}$.

Notice that if a measure $\nu$ is essentially spread uniformly over $p^{H_{\nu}(P)}$ cells in a partition $Q$, where $Q$ is a partition of $[0,1]$ to equally sized intervals, then by translating with the number $x$ the spreading might change, but in a controllable way. In the worst case, by translating with $x$, we might have that half of the translated atoms lose their mass, and the other half have twice their original mass.
Hence -
\begin{equation}
H_{\nu}\left(x+Q \right)\geq \log\left(\frac{1}{2}p^{H_{\nu}(Q)}\right)=H_{\nu}(Q)+\log\left(\frac{1}{2} \right).
\end{equation}
By symmetry, we have the following inequality
\begin{equation}
H_{\nu}\left(Q \right)\geq H_{\nu}\left(x+Q\right)+\log\left(\frac{1}{2} \right).
\end{equation}
Therefore, we have proved~\eqref{eq:translate-entropy}.
By inequality~\eqref{eq:translate-entropy} we have that 
\begin{equation}
\frac{1}{N_{1}}\left|H_{\mu}\left(P_{p^{N_1}}\right)-H_{m_{N}^{N^\delta}}\left(P_{p^{N_1}} \right) \right| \leq \frac{1}{N_{1}}\left|H_{\mu}\left(x+P_{p^{N_1}}\right)-H_{m_{N}^{N^\delta}}\left(x+P_{p^{N_1}} \right) \right|+O(1/N_{1}). 
\end{equation}
By weak-$*$ convergence we have that for every atom $p$ of $x+P_{p^{N_1}}$,
\begin{equation*}
m_{N}^{N^\delta}(p)\to \mu(p).
\end{equation*}
Consequently, for large enough $N$ -
\begin{equation}\label{eq:compare-entropy}
\frac{1}{N_1}\left|H_{m_{N}^{N^\delta}}\left(x+P_{p^{N_{1}}}\right)-H_{\mu}\left(x+P_{p^{N_1}}\right) \right|<\varepsilon/2.
\end{equation}
So we have that
\begin{equation*}
h_{\mu}(T_{p},P_{p})\geq \frac{1}{N_{1}}H_{m_{N}^{N^\delta}}\left(P_{p^{N_1}}\right)-\varepsilon-o(1). 
\end{equation*}
By subadditivity of the entropy, we have that $H_{m_{N}^{N^\delta}}\left(P_{p^{N_1}}\right)\geq H_{m_{N}^{N^\delta}}\left(P_{p^{N}}\right) \geq c_{3}N_{1}\log(p)$ for some $c_3>0$, as long as $N>N_1$. Hence
\begin{equation*}
h_{\mu}(T_{p},P_{p})\geq c_{3}-\varepsilon-o(1),
\end{equation*}
which we can assume is greater than zero for large enough $N$, and $\varepsilon$ small enough.
\end{proof}

Notice that this limit measure $\mu$ is not necessarily $T_{p}$-ergodic, but by taking the ergodic decomposition of $\mu$
\begin{equation*}
\mu=\int_{T_{p}\text{ ergodic measures}}\mu^{\mathcal{E}}d\mathcal{E},
\end{equation*}
we can find an ergodic component with positive entropy.
This completes the proof of Theorem~\ref{thm:inv-measure-positive-entropy} in the prime case.

\subsection{Proof of Theorem~\ref{thm:inv-measure-positive-entropy} - composite case.}\label{sub-composite-theorem}
The proof of the theorem is very similar to the proof of the prime case.
Using proposition~\ref{prop:difference-of-rational-points-in-closure} we construct two sequences of points $\{x_{n} : n\in \mathbb{N}\},\{y_{n} : n\in \mathbb{N}\} \subset \mathbb{R}/\mathbb{Z}$ for which the elements of each sequence belong to the orbit closure of $x$ under the $T_{q}$ map, and satisfy $x_{i}-y_{i}=\ell_{i}/q^{i}$ for each $i\in\mathbb{N}$ where $\ell_{i}\in\{1,\ldots,q^{i}-1\}$ and $\ell_{i+1} = \ell_{i} \mod {q^{i}}$.
By induction we deduce $\gcd(\ell_{i},q)=\gcd (\ell_{1},q)$, we denote $\gcd(\ell_{1},q)$ as $d$.
By the congruence properties of the sequence $\{\ell_{n} : n\in\mathbb{N}\}$, we know that $\gcd(\ell_{n}/d,p)=1$ for every prime number $p$ which divides $q$, hence for every given power $p^{e}$ of $p$ we have $\gcd(\ell_{n}/d,p^{e})=1$ also. By the Chinese reminder theorem we learn that for each integer $n$ $\ell_{n}/d$ is an invertible element modulo $q^{n}$.
Therefore, the linear map induced by multiplication by $\ell_{n}$ on $\mathbb{Z}/q^{n}\mathbb{Z}$ is at-most $d$-to-$1$ map.

Let $\{a_{k} : k \in \mathbb{N} \}$ be a sequence having $h_{p\text{-adic}}\left(\{a_{k}\}\right)=c_{1}>0$ for some fixed prime number $p$ which divides $q$.
Hence for large $N$, using the Chinese reminder theorem, the reduced sequence $\left\{a_{k}^{\left(q^{N}\right)} : k\in \mathbb{N}\right\} \subset\mathbb{Z}/q^{N}\mathbb{Z}$ contains at-least $p^{Nc_{2}}$ elements, for some $c_{2}>0$ constant.
Therefore, $\ell_{N}\cdot\left\{a_{k}^{(q^{N})} : k\in \mathbb{N}\right\}$ contains at-least $p^{Nc_{2}}/d$ elements.

Examining the Minkowski difference of the sets $\overline{\{a_{k}x_{N} : k \in \mathbb{N}\}},\overline{\{a_{k}y_{N} : k \in \mathbb{N}\}}$, we 
deduce that at-least one of those sets, say $\overline{\{a_{k}x_{N} : k \in \mathbb{N}\}}$, intersects at-least $\frac{p^{Nc_{1}}}{2d}$ atoms of the partition $P_{q^N}$. Define $\{z_{n} : n \in\mathbb{N}\}$ to be a set of points sampled from $\overline{\{a_{k}x_{N} : k \in \mathbb{N}\}}$ such that each point lies in a distinct $P_{q^N}$ atom.
Defining $m_{N}$ to be the uniform measure over the set of points $\{z_{n} : n\in\mathbb{N}\}$, we conclude the following analogue of lemma~\ref{lem:large-entropy-measure-initial} -
\begin{lem}
$H_{m_{N}}(P_{q^{N}})\geq c_{2}N\log(p)-c_{2}\log(d) \geq c_{3}N\log(p)$.
\end{lem}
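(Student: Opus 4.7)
The plan is to follow the same template as Lemma \ref{lem:large-entropy-measure-initial} in the prime case, with the only new ingredient being the bookkeeping of the factor $d = \gcd(\ell_1, q)$ that appears in the composite setting. Since $m_N$ was defined as the uniform probability measure on a set of points $\{z_n\}$ chosen one per atom, each of these points sits in a distinct atom of $P_{q^N}$. Hence the pushforward of $m_N$ to the partition $P_{q^N}$ is uniform on $|\{z_n\}|$ atoms, and the entropy computation reduces to a single logarithm,
\begin{equation*}
H_{m_N}(P_{q^N}) \;=\; -\!\!\sum_{s \text{ atom of } P_{q^N}}\!\!m_N(s)\log m_N(s) \;=\; \log |\{z_n\}|.
\end{equation*}

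Next I would plug in the cardinality bound established just before the statement of the lemma. The discussion produced at least $\frac{p^{N c_2}}{2d}$ distinct atoms of $P_{q^N}$ intersected by $\overline{\{a_k x_N : k \in \mathbb{N}\}}$, where $c_2 > 0$ comes from the positive upper $p$-adic combinatorial entropy hypothesis and the Chinese remainder theorem, and $d = \gcd(\ell_1, q)$ bounds the multiplicity of multiplication by $\ell_N$ on $\mathbb{Z}/q^N\mathbb{Z}$. Substituting gives
\begin{equation*}
H_{m_N}(P_{q^N}) \;\geq\; \log\!\left(\tfrac{p^{N c_2}}{2 d}\right) \;=\; c_2 N \log(p) \;-\; \log(2d).
\end{equation*}

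Finally, since $d$ is a fixed constant (independent of $N$, depending only on $q$ and the sequences $\{x_i\}, \{y_i\}$ fixed once and for all via Proposition~\ref{prop:difference-of-rational-points-in-closure}), the term $\log(2d)$ is a bounded additive correction. For any $0 < c_3 < c_2$ and $N$ sufficiently large, we have $c_2 N \log(p) - \log(2d) \geq c_3 N \log(p)$, which yields the stated bound. I do not anticipate any genuine obstacle here: the entire content of the lemma is the cardinality lower bound already established in the paragraph preceding the statement, and the lemma just repackages it as an entropy estimate of the correct order $N \log(p)$, matching the scaling of the prime case so that the subsequent averaging and weak-$*$ limiting arguments of Section~\ref{sub-prime-theorem} can be applied verbatim with $T_p$ replaced by $T_q$.
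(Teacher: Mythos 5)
Your proof is correct and follows the same template the paper uses for Lemma~\ref{lem:large-entropy-measure-initial} in the prime case; the paper in fact omits an explicit proof of this composite-case lemma, and your argument is the natural implicit one. The only cosmetic discrepancy is that the direct computation $\log\bigl(p^{Nc_2}/(2d)\bigr)=c_2N\log p-\log(2d)$ that you derive does not literally match the intermediate expression $c_2N\log(p)-c_2\log(d)$ stated in the lemma, but both are the same up to an $O(1)$ additive correction and yield the same conclusion $\geq c_3N\log p$ for $N$ large.
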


The rest of the proof is verbatim the same as the proof of Theorem~\ref{thm:inv-measure-positive-entropy}.

\section{Proof of Theorem \ref{thm:p-adic}}\label{proof-p-adic}\hypertarget{proof-p-adic}
In this section we will prove Theorem~\ref{thm:p-adic} by using Meiri's charicterization of $p$-Host sequences.

\begin{thm}\label{pre-meiri}
Let $q$ be a fixed integer, $\{a_{m} : m\in \mathbb{N}\}$ a sequence of integers having \emph{positive upper $q$-adic entropy}, and let $\{b_{k} : k\in\mathbb{N}\}$ be a $q$-Host sequence, then for every $x\in\mathbb{R}\backslash\mathbb{Q}$ we have that the set $\{q^{n}a_{m}b_{k}x : n,m,k\in\mathbb{N}\}$ is dense in $\mathbb{R}/\mathbb{Z}$.
\end{thm}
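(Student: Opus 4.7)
The plan is to combine the construction of a $T_q$-invariant positive-entropy measure from Theorem~\ref{thm:inv-measure-positive-entropy} with the defining property of $q$-Host sequences; Theorem~\ref{pre-meiri} is essentially the clean synthesis of the two ingredients. Concretely, I would first invoke Theorem~\ref{thm:inv-measure-positive-entropy} with the sequence $\{a_m\}$, the integer $q$, and the given irrational $x$. The positive upper $q$-adic entropy hypothesis on $\{a_m\}$ is exactly what that theorem requires, and it produces a Borel probability measure $\mu$ on $\mathbb{R}/\mathbb{Z}$ which is $T_q$-invariant, $T_q$-ergodic, has $h_\mu(T_q)>0$, and crucially satisfies $\supp(\mu)\subseteq\overline{\{q^n a_m x:n,m\in\mathbb{N}\}}$.

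Next, I would feed this measure $\mu$ into the $q$-Host property of $\{b_k\}$. By the definition of a $q$-Host sequence, applied to the $T_q$-invariant, ergodic, positive-entropy measure $\mu$, there exists a full-$\mu$-measure set $Y\subset\mathbb{R}/\mathbb{Z}$ such that for every $y\in Y$ the sequence $\{b_k y:k\in\mathbb{N}\}$ equidistributes modulo $1$, and in particular is dense in $\mathbb{R}/\mathbb{Z}$. Since $\mu(Y)=1$ and $\supp(\mu)\subseteq\overline{\{q^n a_m x\}}$, I can select some
\[
y_{0}\in Y\cap \overline{\{q^n a_m x:n,m\in\mathbb{N}\}},
\]
and write $y_{0}=\lim_{i\to\infty}q^{n_i}a_{m_i}x$ in $\mathbb{R}/\mathbb{Z}$.

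Finally, I would transport the density of $\{b_k y_0\}$ back to the original triple product. For each fixed $k$, continuity of the multiplication-by-$b_k$ map on $\mathbb{R}/\mathbb{Z}$ gives
\[
b_k y_0=\lim_{i\to\infty}b_k\,q^{n_i}a_{m_i}x=\lim_{i\to\infty}q^{n_i}a_{m_i}b_k x\pmod{1},
\]
so each point $b_k y_0$ lies in $\overline{\{q^n a_m b_k x:n,m,k\in\mathbb{N}\}}$. Taking the union over $k$, the dense set $\{b_k y_0:k\in\mathbb{N}\}$ is contained in that closure, which is therefore all of $\mathbb{R}/\mathbb{Z}$, proving the claimed density.

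The main conceptual step, namely building an ergodic $T_q$-invariant measure of positive entropy supported on the orbit closure of $\{q^n a_m x\}$, has already been carried out in Theorem~\ref{thm:inv-measure-positive-entropy}; and the $q$-Host hypothesis is designed exactly to produce a $\mu$-generic point whose $\{b_k\cdot\}$-orbit equidistributes. Consequently there is no substantive obstacle left in this proof — the only care needed is bookkeeping: verifying that the entropy hypothesis in the statement matches the "positive local upper $q$-adic combinatorial entropy" input of Theorem~\ref{thm:inv-measure-positive-entropy}, and checking that $\supp(\mu)$ is indeed contained in the orbit closure so that the generic point $y_0$ can be realised as a limit $\lim_i q^{n_i}a_{m_i}x$ and pushed forward by $b_k$.
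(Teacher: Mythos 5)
Your proposal is correct and follows exactly the paper's own argument: invoke Theorem~\ref{thm:inv-measure-positive-entropy} to produce a $T_q$-invariant, ergodic, positive-entropy measure $\mu$ supported in $\overline{\{q^n a_m x\}}$, apply the $q$-Host property to get a $\mu$-generic point $y_0$ whose $\{b_k y_0\}$-orbit equidistributes, and transport density back via approximation and continuity of multiplication by $b_k$. The only difference is that you spell out the final approximation/continuity step more explicitly than the paper, which compresses it to a single sentence.
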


\begin{proof}
By Theorem~\ref{thm:inv-measure-positive-entropy}, there exists a Borel probability measure which is $T_{q}$-invariant and having positive entropy supported inside $\overline{\{q^{n}a_{m}x : n,m\in\mathbb{N}\}}$.
By the definition of a $q$-Host sequence, for $\mu$ almost-every $y$, the sequence $\{b_{k}y : k\in\mathbb{N}\}$ is equidistributed in $\mathbb{R}/\mathbb{Z}$ with respect to the Haar measure, and in-particular, dense.
Approximating such a point with points from $\{q^{n}a_{m}x : n,m\in\mathbb{N}\}$, we deduce the theorem.
\end{proof}

In \cite{meiri}, D. Meiri proved the following theorem - 
\begin{thm}[\cite{meiri},Theorem~$3.2$] Assume that $\{a_{n} : n\in \mathbb{N}\}$ has a smooth $p$-adic interpolation by a function $f$ which has finitely many critical points inside the unit disc, then $\{a_{n} : n\in\mathbb{N}\}$ is a $p$-Host sequence.
Moreover, assume that $q$ is a composite integer, and for each prime number $p$ dividing $q$ the sequence $\{a_{n} : n\in \mathbb{N}\}$ has a smooth $p$-adic interpolation by a function $f$ which has finitely many critical points inside the unit disc, then $\{a_{n} : n\in\mathbb{N}\}$ is a $q$-Host sequence.
\end{thm}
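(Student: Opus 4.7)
The plan is to verify Weyl's equidistribution criterion for the sequence $\{a_{n}x\}$ for $\mu$-almost every $x$. By a standard $L^{2}$-argument combined with the Borel--Cantelli lemma along a dyadic subsequence of $N$'s, it suffices to show that for every nonzero integer $t$,
\begin{equation*}
\int \left| \frac{1}{N} \sum_{n=1}^{N} e^{2\pi i t a_{n} x} \right|^{2} d\mu(x) = \frac{1}{N^{2}} \sum_{n,m=1}^{N} \hat{\mu}\bigl(t(a_{n}-a_{m})\bigr) \longrightarrow 0.
\end{equation*}
This reduces the whole question to understanding the Fourier coefficients of $\mu$ along the difference set $\{a_{n}-a_{m}\}$.

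The $p$-adic interpolation furnishes the arithmetic input. Writing $a_{n}=f(n)$ for a smooth $p$-adic function $f$ on the unit disc with only finitely many critical points, a Weierstrass-preparation/Strassman style factorisation yields $f(z)-f(w)=(z-w)\,g(z,w)$ with $g$ a $p$-adic unit away from a small neighbourhood of the critical locus. This forces $v_{p}(a_{n}-a_{m})\geq v_{p}(n-m)-C$ for a constant $C=C(f)$, provided $n$ and $m$ avoid the exceptional neighbourhood. Hence a positive proportion of pairs $(n,m)$ with $1\leq n,m\leq N$ satisfy that $t(a_{n}-a_{m})$ is divisible by a very high power of $p$.

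The positive entropy hypothesis supplies the ergodic input. The key fact, essentially due to Host, is that for $T_{p}$-invariant, ergodic $\mu$ of positive entropy one has $\hat{\mu}(tp^{k})\to 0$ as $k\to\infty$ for every fixed nonzero integer $t$; heuristically, positive $T_{p}$-entropy prevents $\mu$ from being detected by any single Fourier mode at arbitrarily fine $p$-adic scales. Combining this with the $p$-adic valuation estimate, the double sum splits into \emph{good} pairs (where $v_{p}(a_{n}-a_{m})$ is large and the Fourier decay applies) and \emph{bad} pairs (near critical points, or with small $v_{p}(n-m)$), and one shows the bad pairs are sparse enough to yield the overall $o(1)$ bound.

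The main obstacle is the quantitative balancing of these two estimates: one needs a rate of decay for $\hat{\mu}(tp^{k})$ that beats the density of exceptional pairs, and this in turn requires an effective version of Host's Fourier-decay statement, obtained via a careful analysis of the conditional expectations of $\mu$ with respect to the $\sigma$-algebras generated by the $p$-adic fibres of $T_{p}$. For the composite case $q=p_{1}^{e_{1}}\cdots p_{r}^{e_{r}}$, one executes the argument at a single prime $p=p_{i}$ chosen so that an appropriate ergodic component of $\mu$ (or a power of $T_{q}$) has positive $T_{p}$-entropy, and uses the Chinese Remainder Theorem to coordinate the simultaneous $p$-adic valuation estimates coming from the interpolation hypothesis at each prime dividing $q$.
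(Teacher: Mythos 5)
This statement is quoted from \cite{meiri} (Meiri's Theorem~3.2) and the present paper does not prove it; it is used as a black box, so there is no internal proof to compare your proposal against. That said, your sketch contains a genuine error in its key step, so it cannot be taken as a correct outline of Meiri's argument either.

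The ergodic input you invoke is false. You claim, attributing it loosely to Host, that for a $T_{p}$-invariant ergodic measure $\mu$ of positive entropy one has $\hat{\mu}(tp^{k})\to 0$ as $k\to\infty$ for every fixed nonzero $t$. But $T_{p}$-invariance forces the exact identity $\hat{\mu}(tp^{k})=\hat{\mu}(t)$ for all $k\geq 0$: indeed $\int e^{-2\pi i t p^{k}x}\,d\mu(x)=\int e^{-2\pi i t x}\,d(T_{p}^{k})_{*}\mu(x)=\hat{\mu}(t)$, since $e^{2\pi i t\,\cdot}$ is $\mathbb{Z}$-periodic. So $\hat{\mu}(tp^{k})$ is constant in $k$, and if it were zero for every nonzero $t$ then $\mu$ would be Lebesgue; positive entropy alone gives no decay along $p$-power multiples. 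Consequently your proposed split into good pairs (high $p$-adic valuation of $a_{n}-a_{m}$, supposedly giving small Fourier coefficient) versus bad pairs is based on a mechanism that points in the wrong direction: high divisibility by $p$ does not make $\hat{\mu}(t(a_{n}-a_{m}))$ small, it leaves it unchanged. Host's actual theorem concerns decay of averages of $\hat{\mu}$ along $\{q^{n}\}$ with $q$ \emph{coprime} to $p$, and its proof goes through the Shannon--McMillan--Breiman theorem and exponential bounds on $\mu$-measures of $p$-adic cylinders, not through pointwise Fourier decay along $p$-powers. Meiri's generalization likewise works by controlling cylinder measures and the $p$-adic spreading of the values $a_{n}$, not by the Fourier-decay claim you state. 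The Weyl/$L^{2}$/Borel--Cantelli scaffolding and the Weierstrass-preparation control of $v_{p}(a_{n}-a_{m})$ are reasonable starting points, but the central analytic estimate as written does not hold.
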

Lindenstrauss' subsequent work (\cite{lindenstrauss2001p}, Theorem~$3.2$, Examples~$3.2,3.3$) generalized the above mentioned theorem, and in-particular relaxed its conditions, requiring only one prime number $p$ which divides $q$ for which $\{a_{n}\}$ has a smooth $p$-adic interpolation in order to conclude that $\{a_{n}\}$ is a $q$-Host sequence, as long as for any other prime $p'$ which divides $q$ either $\{a_{n}\}$ admits a smooth $p'$-adic interpolation or $\|a_{n}\|_{p'} \to 0$. A prototypical example to the above mentioned situation is the sequence $a_{n}=2^{2^{n}}$ and $q=6$, where the sequence is $3$-Host but $\|a_{n}\|_{2} \to 0$. 

Moreover, during the proof of his theorem, Meiri established the following theorem - 
\begin{thm}[\cite{meiri}, Theorem~$3.1$] Assume that $\{a_{n} : n\in \mathbb{N}\}$ has a smooth $p$-adic interpolation by a function $f$ which has finitely many critical points inside the unit disc, then $\overline{h_{p-\text{adic}}}(\{a_{n}\})>0$.
\end{thm}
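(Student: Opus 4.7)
The plan is to extract positive entropy from the behavior of $f$ on a single arithmetic progression of positive integers, localized near a point where $f$ is essentially linear in the $p$-adic sense. Since $f$ is $p$-adic analytic and its critical set inside the unit disc is finite, I would first pick a rational integer $k_0 \in \mathbb{N}$ with $f'(k_0) \neq 0$ and set $e := v_p(f'(k_0)) < \infty$. The goal is then to show that on the single progression $k_0 + p^m \mathbb{Z}$ (for a suitable fixed $m$), the sequence $\{a_k\}$ already attains roughly $p^n$ distinct residues modulo $p^n$.

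The heart of the argument is a $p$-adic Taylor linearization of $f$ near $k_0$. Writing $f(x) = \sum_{n \geq 0} c_n (x - k_0)^n$ with bounded coefficients (which may be taken integral after rescaling), one uses, for any $x_1, x_2 \in k_0 + p^m \mathbb{Z}_p$ and any $n \geq 2$, the factorization
\[
(x_1 - k_0)^n - (x_2 - k_0)^n \;=\; (x_1 - x_2)\sum_{i=0}^{n-1}(x_1 - k_0)^i (x_2 - k_0)^{n-1-i}
\]
to see that each nonlinear contribution has $p$-adic valuation at least $(n-1)m + v_p(x_1 - x_2) \geq m + v_p(x_1 - x_2)$. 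Choosing $m > e$ will force the linear term $c_1(x_1 - x_2)$, whose valuation is exactly $e + v_p(x_1 - x_2)$, to dominate, giving the sharp equality
\[
v_p\bigl(f(x_1) - f(x_2)\bigr) \;=\; e + v_p(x_1 - x_2).
\]

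Applying this to integers $k_1, k_2$ in the progression $k_0 + p^m \mathbb{Z}$, I would conclude that $a_{k_1} \equiv a_{k_2} \pmod{p^n}$ if and only if $v_p(k_1 - k_2) \geq n - e$. Since the progression meets every coset of $p^{n-e}\mathbb{Z}$ inside $k_0 + p^m \mathbb{Z}$, this produces at least $p^{n-e-m}$ distinct residues of $a_k$ modulo $p^n$, as soon as $n > e + m$. Consequently
\[
h_{\text{comb } p^n}(\{a_k\}) \;\geq\; \frac{(n - e - m)\log p}{n} \;\xrightarrow[n\to\infty]{}\; \log p,
\]
so that $\overline{h_{p\text{-adic}}}(\{a_k\}) \geq \log p > 0$, as required.

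The main technical pitfall is the Taylor-linearization step itself: one must genuinely control the full nonlinear tail $\sum_{n\geq 2} c_n (x-k_0)^n$ uniformly on the disc $k_0 + p^m \mathbb{Z}_p$, and ensure this disc is still large enough to contain infinitely many positive integers. This is exactly where the smoothness hypothesis (true $p$-adic analyticity with bounded coefficients) enters, and where the finiteness of the critical set pays off: it lets me pick $k_0$ for which $e = v_p(f'(k_0))$ is finite, so a single choice of $m$ (e.g.\ $m = e+1$) works uniformly as $n \to \infty$.
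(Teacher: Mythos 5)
The paper does not actually reproduce a proof of this statement --- it is quoted as Theorem~3.1 of Meiri's paper and used as a black box --- so there is no in-text proof to compare against. Judged on its own, your argument is the right one and is essentially the classical linearization technique underlying Meiri's result: pick a non-critical integer $k_0$ (possible since the critical set is finite and $\mathbb{N}$ is infinite), set $e=v_p(f'(k_0))$, and use the ultrametric Taylor estimate to show $v_p(f(k_1)-f(k_2))=e+v_p(k_1-k_2)$ on a sufficiently deep progression $k_0+p^m\mathbb{Z}$, whence the map $k\mapsto f(k)\bmod p^n$ achieves $p^{n-e-m}$ distinct values and the $p$-adic combinatorial entropy tends to $\log p$.

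The one place that needs tightening, which you flag as the ``main technical pitfall,'' is the choice of $m$. Writing $f(x)=\sum_{n\ge 0}c_n(x-k_0)^n$, analyticity on the closed unit disc gives $v_p(c_n)\to\infty$, so the $c_n$ are bounded below in valuation (say $v_p(c_n)\ge -C$) but need not lie in $\mathcal{O}_K$; hence the nonlinear term of index $n\ge 2$ only has valuation $\ge v_p(c_n)+(n-1)m+v_p(x_1-x_2)$, and the correct condition for the linear term to dominate is $m>\max_{n\ge 2}\bigl(e-v_p(c_n)\bigr)/(n-1)$, a finite maximum since $v_p(c_n)\to\infty$. Taking $m>e$ alone does not suffice, and the parenthetical ``may be taken integral after rescaling'' is not literally available without changing the sequence (multiplying $f$ by $p^C$ replaces $a_n$ by $p^C a_n$, which does leave the asymptotic combinatorial entropy unchanged, so this is a harmless maneuver if you say so explicitly --- but simply choosing $m$ larger as above is cleaner). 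With that one adjustment the argument is complete.

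Two small hygiene points worth stating: since $a_n\in\mathbb{Z}$ and $\mathbb{N}$ is dense in $\mathbb{Z}_p$, continuity forces $f(\mathbb{Z}_p)\subset\mathbb{Z}_p$ even if $f$ a priori takes values in a larger $\mathcal{O}_K$, so the residues $a_k\bmod p^n$ are well defined in $\mathbb{Z}/p^n\mathbb{Z}$; and $e=v_p(f'(k_0))$ may a priori be a rational (not integer) multiple of $1$ if $f'(k_0)$ lives in a ramified extension, but nothing in the counting argument uses integrality of $e$, only its finiteness.
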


Combining Meiri's and Lindenstrauss' theorems with Theorem~\ref{pre-meiri} we deduce the following corollary - 
\begin{cor}\label{coro-main-theorem} Let $q$ be an integer and let $\{a_{m} : m \in \mathbb{N}\}, \{b_{k} : k\in\mathbb{N}\}$ be two sequences of integers.
If for some prime number $p$ dividing $q$ the sequence $\{a_{m}\}$ admits a smooth $p$-adic interpolation with only finitely many critical points inside the unit disc, and for some prime number $p'$ which divides $q$ the sequence $\{b_{k} : k\in\mathbb{N}\}$ admits a smooth $p'$-adic interpolation with only finitely many critical points inside the unit disc and for every other prime $p'$ which divides $q$ either $\{b_{k}\}$ admits a smooth $p'$-adic interpolation with only finitely many critical points inside the unit disc or $\|b_{k}\|_{p'} \to 0$, then for any $x\in\mathbb{R}\backslash\mathbb{Q}$ the set $\{q^{n}a_{m}b_{k}x : n,m,k\in\mathbb{N}\}$ is dense in $\mathbb{R}/\mathbb{Z}$.
\end{cor}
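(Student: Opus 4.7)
The plan is to observe that Corollary~\ref{coro-main-theorem} is essentially an assembly of three previously established results, and the task is to verify that the hypotheses on $\{a_m\}$ and $\{b_k\}$ line up precisely with what Theorem~\ref{pre-meiri} demands, namely that $\{a_m\}$ has positive upper $q$-adic entropy and that $\{b_k\}$ is a $q$-Host sequence.

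First I would handle the ``positive entropy'' input. By hypothesis, there exists a prime $p \mid q$ such that $\{a_m\}$ admits a smooth $p$-adic interpolation by a function with only finitely many critical points inside the unit disc. Meiri's Theorem~$3.1$ then gives $\overline{h_{p\text{-adic}}}(\{a_m\}) > 0$. Since $p \mid q$, this is exactly the definition of positive local upper $q$-adic combinatorial entropy, which is the hypothesis required in Theorem~\ref{pre-meiri} (and in Theorem~\ref{thm:inv-measure-positive-entropy} upstream of it).

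Next I would verify the $q$-Host property for $\{b_k\}$. By hypothesis, $\{b_k\}$ admits a smooth $p'$-adic interpolation (with finitely many critical points in the unit disc) for some prime $p' \mid q$, and for every other prime $p'' \mid q$ either $\{b_k\}$ has a similar smooth $p''$-adic interpolation or $\|b_k\|_{p''} \to 0$. This is precisely the form of hypothesis handled by Lindenstrauss's strengthening of Meiri's Theorem~$3.2$ (as cited above from \cite{lindenstrauss2001p}, Theorem~$3.2$, Examples~$3.2, 3.3$), which concludes that $\{b_k\}$ is a $q$-Host sequence.

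Finally, with both inputs in hand, I would simply invoke Theorem~\ref{pre-meiri} with the fixed integer $q$, the positive-upper-$q$-adic-entropy sequence $\{a_m\}$, the $q$-Host sequence $\{b_k\}$, and the irrational $x$, to conclude that $\{q^{n}a_{m}b_{k}x : n,m,k\in\mathbb{N}\}$ is dense in $\mathbb{R}/\mathbb{Z}$. There is no real obstacle here since the construction of the $T_q$-invariant measure of positive entropy (Theorem~\ref{thm:inv-measure-positive-entropy}) and the density argument of Theorem~\ref{pre-meiri} have already been carried out; the only care needed is to check that Lindenstrauss's extension is being applied in exactly the generality stated, so that one prime $p'$ suffices to give the smooth interpolation while the remaining primes are allowed to be absorbed via the $\|b_k\|_{p''} \to 0$ condition, matching the hypothesis of the corollary verbatim.
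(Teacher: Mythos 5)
Your proposal is correct and follows exactly the same route as the paper: apply Meiri's Theorem~3.1 to get positive local upper $q$-adic combinatorial entropy for $\{a_m\}$, apply Lindenstrauss's strengthening of Meiri's Theorem~3.2 to conclude $\{b_k\}$ is $q$-Host, and feed both into Theorem~\ref{pre-meiri}. The paper's proof is precisely this assembly, stated in a single sentence.
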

This concludes the proof of Theorem~\ref{thm:p-adic}.

\begin{defn}
A \emph{locally $p$-adic analytic} function $f$ is a function defined by power series expansion in some open disc around the origin in a finite extension of $\mathbb{Q}_p$.
\end{defn}
\begin{obs}
By the Formal Substitution Lemma for $p$-adic analytic functions (c.f. \cite{robert-p-adic-book}, Section $6.1.5$), one can compose two locally $p$-adic analytic functions $f(x), g(x)$ where $g(x)$ satisfies $g(0)=0$ and some moderate condition over the growth modulus, and get a locally $p$-adic analytic function.
\end{obs}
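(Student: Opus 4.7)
The plan is to verify the composition $f \circ g$ is locally $p$-adic analytic by explicitly computing its power series expansion and exploiting the non-archimedean valuation to justify the formal rearrangement. Write $f(y) = \sum_{n \geq 0} a_n y^n$ with radius of convergence $r_f$ around $0$, and $g(x) = \sum_{n \geq 1} b_n x^n$ (using $g(0) = 0$) with radius of convergence $r_g$, both with coefficients in some finite extension $K$ of $\mathbb{Q}_p$. The ``moderate condition over the growth modulus'' should be identified as the existence of some $0 < r \leq r_g$ for which $\sup_{n \geq 1} |b_n|_p r^n =: M < r_f$. Since $|g(x)|_p \leq \max_{n \geq 1} |b_n|_p |x|_p^n$ by the ultrametric inequality, this guarantees $|g(x)|_p \leq M < r_f$ whenever $|x|_p \leq r$, so $f(g(x))$ is at least well-defined pointwise.

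Next I would expand $g(x)^n = \sum_{k \geq n} c_{n,k} x^k$, where the coefficients $c_{n,k}$ are universal polynomials in the $b_i$'s that vanish for $k < n$ (using $g(0) = 0$), then multiply by $a_n$ and attempt to swap the order of summation:
$$f(g(x)) = \sum_{n \geq 0} a_n \sum_{k \geq n} c_{n,k} x^k \overset{?}{=} \sum_{k \geq 0} \left( \sum_{n=0}^{k} a_n c_{n,k} \right) x^k.$$
In the non-archimedean setting this interchange is legitimate exactly when the doubly-indexed terms $a_n c_{n,k} x^k$ tend to zero as $\max(n,k) \to \infty$; the whole argument therefore reduces to a single clean tail estimate.

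To verify that estimate, I would invoke multiplicativity of the Gauss norm on the Tate algebra $K\langle x/r \rangle$: from $\|g\|_r = \sup_{n \geq 1} |b_n|_p r^n = M$ one gets $\|g^n\|_r = M^n$, yielding $|c_{n,k}|_p r^k \leq M^n$ for every $n, k$. Hence for $|x|_p \leq r$,
$$|a_n c_{n,k} x^k|_p \leq |a_n|_p M^n,$$
which tends to $0$ as $n \to \infty$ uniformly in $k$, because $M < r_f$ forces $|a_n|_p M^n \to 0$. This simultaneously delivers convergence of the double series and validity of the reordering, so the coefficients $A_k := \sum_{n=0}^{k} a_n c_{n,k}$ are well-defined and $f(g(x)) = \sum_{k \geq 0} A_k x^k$ on the disc $|x|_p \leq r$, proving $f \circ g$ is locally $p$-adic analytic in a neighborhood of $0$.

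The main obstacle is formulating the ``moderate condition over the growth modulus'' precisely enough to apply cleanly to the situation used later in the paper; once it is pinned down to $\sup_n |b_n|_p r^n < r_f$ on some small disc, the non-archimedean double-summation machinery makes the reordering essentially automatic, and the entire claim collapses to the single Gauss-norm bound $|c_{n,k}|_p r^k \leq M^n$.
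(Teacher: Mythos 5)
Your proposal is correct, and it is essentially the standard argument behind the Formal Substitution Lemma that the paper simply cites (Robert, Section 6.1.5) without giving its own proof: your growth-modulus condition $\sup_{n\geq 1}|b_n|_p r^n < r_f$ is exactly the hypothesis used there, and the Gauss-norm bound $|c_{n,k}|_p r^k \leq M^n$ is the same key estimate. The only point to make explicit is that for each fixed $n$ the terms $a_n c_{n,k} x^k$ also tend to $0$ as $k\to\infty$ (immediate, since $g^n$ converges on the disc $|x|_p\leq r$ after shrinking $r$ below $r_g$ if necessary), which combined with your uniform-in-$k$ decay in $n$ gives the decay as $\max(n,k)\to\infty$ needed for the reordering.
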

\begin{ex}
The sequence $\{ 3^{n} : n\in \mathbb{N} \}$ does not admit a $2$-adic interpolation, as the function $f\left(x\right)=3^{x}=\exp \left(x\log\left(3\right)\right)$ 
where $\exp$ is the $2$-adic exponential is not $2$-adic analytic function defined in the whole unit disc, as the exponential has radius of convergence equals to $2^{-1/2}$ and the function $\log \left(1+x\right)$ has radius of convergence equals to $1$.
By examining a modified version of the function - $f_{2}\left(x\right)=3^{2\cdot x}$, we see that the function $f_{2}$ is indeed a $2$-adic analytic function defined in the whole unit disc, as $val_{2}(2)=1$, therefore see that $f$ is a \emph{locally $2$-adic analytic function}.
In general, the $p$-adic exponential function $\exp_{p}$ only converges in the disc $\lvert z \rvert_{p} < p^{-1/(p-1)}$, by picking a suitable integer $k$ for which $val_{p}(k)-1/(p-1)>0$, we can find sub-sequences of ''exponentially defined'' sequences which admits smooth $p$-adic interpolation.
\end{ex}
In a similar fashion to the example, in view of Meiri's and Lindenstrauss' results, we have the following - 
\begin{prop}
Fix an integer $q$. Assume that a sequence of integers $\{a_{n} : n\in \mathbb{N}\}$ is given by the following formula - $a_{n}=f\left(n\right)$ where $f$ is some \emph{locally $p$-adic analytic} function with finitely many critical values inside some disc in $\mathbb{Q}_{p}$, for some prime number $p$ dividing $q$. Then for a suitable sub-sequence $\{a_{n_k} : k\in\mathbb{N}\} \subset \{a_{n} : n\in\mathbb{N}\}$ we have that $\{a_{n_k} : k\in\mathbb{N}\}$ has positive upper local $q$-adic entropy.
Moreover, if for any other prime $p'$ which divides $q$ either $\{a_{n}\}$ admits a smooth interpolation by locally $p'$-adic analytic function with finitely many critical values inside some disc in $\mathbb{Q}_{p'}$ or $\|a_{n}\|_{p'} \to 0$, then there exists a sub-sequence $\{a_{n_k} : k\in\mathbb{N}\} \subset \{a_{n} : n\in\mathbb{N}\}$ which is $q$-Host sequence.
\end{prop}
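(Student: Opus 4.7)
The plan is to reduce the locally $p$-adic analytic hypothesis to the genuinely $p$-adic analytic hypothesis (on the full closed unit disc) that Meiri's Theorem~3.1 requires, by means of a simple dilation on the $p$-adic side. Concretely, assume $f$ is analytic on the disc $\{x : |x|_p \leq p^{-e}\}$ for some $e\geq 0$ and contains only finitely many critical points there; I define $g(x) = f(p^e x)$. Then $g$ is $p$-adic analytic on the closed unit disc, and its critical points in the unit disc correspond via the bijection $x \mapsto p^e x$ to critical points of $f$ in $\{|x|_p \leq p^{-e}\}$, so $g$ inherits the finiteness property. Setting $n_k = k p^e$ gives $a_{n_k} = g(k)$, so $\{a_{n_k}\}$ admits a smooth $p$-adic interpolation to which Meiri's Theorem~3.1 applies, yielding $\overline{h_{p\text{-adic}}}(\{a_{n_k}\}) > 0$; since $p \mid q$, this is the required positive upper local $q$-adic entropy.

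For the ``moreover'' statement, the plan is to find a single arithmetic progression of indices along which Lindenstrauss' hypotheses hold simultaneously for every prime divisor of $q$. For each prime $p' \mid q$ of the first type (one for which $\{a_n\}$ admits a locally $p'$-adic analytic interpolation $f_{p'}$ with finitely many critical points in some disc), the construction above produces an integer $e_{p'}\geq 0$ such that $g_{p'}(x) = f_{p'}((p')^{e_{p'}} x)$ is analytic on the full unit disc with only finitely many critical points. Setting $M = \prod_{p'} (p')^{e_{p'}}$, the product taken over all such primes (including the original $p$), I then consider the subsequence $\{a_{Mk} : k \in \mathbb{N}\}$. For each prime $p'$ of the first type, the function $h_{p'}(x) = g_{p'}((M/(p')^{e_{p'}}) x)$ interpolates $\{a_{Mk}\}$, is analytic on the unit disc (since $M/(p')^{e_{p'}}$ is a $p'$-adic integer, so multiplication by it contracts the unit disc into itself), and retains the finiteness of critical points. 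For primes $p' \mid q$ with $\|a_n\|_{p'} \to 0$, this property is obviously inherited by any subsequence. Lindenstrauss' generalization \cite{lindenstrauss2001p} then certifies that $\{a_{Mk}\}$ is a $q$-Host sequence.

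The main obstacle is not conceptual but one of careful bookkeeping: one must verify that the dilation $x \mapsto p^e x$ transports the ``finitely many critical points'' hypothesis faithfully from $f$ on a small disc to $g$ on the unit disc, and that a single integer $M$ can be chosen to work simultaneously for all primes dividing $q$ that require an interpolation condition. Both checks reduce to the elementary observation that composition with multiplication by a $p'$-adic integer sends $p'$-adic analytic functions on the unit disc to $p'$-adic analytic functions on the unit disc and can only shrink the set of critical points, so the hypotheses of Meiri's and Lindenstrauss' theorems remain valid at each relevant prime after passing to the subsequence $\{a_{Mk}\}$.
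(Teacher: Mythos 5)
Your proof is correct and takes essentially the same approach as the paper: both arguments pass to the subsequence of indices in an arithmetic progression $M\mathbb{N}$ (the paper takes $M=S$ to be a high power of the radical of $q$; you build $M=\prod_{p'}(p')^{e_{p'}}$ more explicitly from the radii of convergence), use the fact that $M/(p')^{e_{p'}}$ is a $p'$-adic unit so the composed interpolating function remains analytic on the full unit disc with finitely many critical points, and then invoke Meiri's Theorem~3.1 for the entropy claim and Lindenstrauss' characterization for the $q$-Host claim. The only cosmetic difference is that you use two (nested) subsequences for the two claims while the paper uses one, and your bookkeeping is a bit more careful than the paper's (which states the unit condition as $\mathrm{val}_p(p')=1$, presumably intending $|p'|_p=1$).
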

\begin{proof}
Define $R$ to be the minimal radius of convergence of the function $f\left(x\right)$ from the different radii of convergence of $f\left(x\right)$ for the various prime numbers $p$ which dividing $q$, in case there is more than one prime $p$ which divides $q$ for which $\{a_{n}\}$ admits a locally $p$-adic analytic interpolating function.
Choosing some integer $S \gg_{q} R$ (for example, one can take $S$ to be the radical of $q$ to the power of $-\log_{p}(R)$ where $p$ is the largest prime dividing $q$) and looking at the sub-sequence $\{a_{S\cdot n} : n\in \mathbb{N} \}$ we deduce that $f\left(S\cdot x\right)$ is a smooth $p$-adic interpolation for this sub-sequence, having only finitely many critical points in the unit disc, and by Meiri's theorem, the sub-sequence $\{a_{S\cdot n} : n\in \mathbb{N} \}$ is having positive upper local $q$-adic entropy.
Notice that for any distinct primes $p,p'$ which divide $q$ we have that $val_{p}(p')=1$ therefore moving to such a sub-sequence would indeed result in a smooth $p$-adic analytic function with only finitely many critical points in the unit disc for any prime $p$ for which $\{a_{n}\}$ admits such interpolation.
In case where $\|a_{n}\|_{p} \to 0$ then obviously $\|a_{Sn}\|_{p} \to 0$ as well, and Lindenstrauss' characterization of $q$-Host sequence implies that $\{a_{Sn}\}$ is a $q$-Host sequence.
\end{proof}

Notice that by the proposition, Corollary~\ref{coro-main-theorem} immediately generalizes to the case where the sequences $\{a_{m} : m \in \mathbb{N}\}, \{b_{k} : k\in\mathbb{N}\}$ are given by interpolation by \emph{locally $p$-adic analytic} functions, as for any $x\in\mathbb{R}\backslash\mathbb{Q}$ one can consider the subset $\{q^{n}a_{S\cdot m}b_{S'\cdot k}x : n,m,k\in\mathbb{N}\}\subset \{q^{n}a_{m}b_{k}x : n,m,k\in\mathbb{N}\}$, where $S,S'$ are the integers which have been computed in the previous proposition, which is dense by the corollary.

Combining the proposition with the observation regarding the $p$-adic substitution lemma, one can deduce the following generalization of Furstenberg's density result for a sparse sequence - 
\begin{cor}[Sparse density theorem] For any irrational number $x\in\mathbb{R}\backslash\mathbb{Q}$ and two non-constant polynomials $p_{1}\left(x\right),p_{2}\left(x\right)$ with integer coefficients, the set \\ $\left\{2^{n}3^{3^{\cdot^{\cdot^{\cdot^{3^{p_{1}(m)}}}}}}3^{3^{\cdot^{\cdot^{\cdot^{3^{p_{2}(k)}}}}}}x : n,m,k\in\mathbb{N}\right\}$ is dense in $\mathbb{R}/\mathbb{Z}$.
\end{cor}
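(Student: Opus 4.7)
The plan is to invoke the generalization of Corollary~\ref{coro-main-theorem} stated in the paragraph immediately preceding, which covers sequences interpolated by locally $2$-adic analytic functions, with $q=2$. Since $2$ is the only prime divisor of $q$, it is enough to exhibit, for each of the two sequences
\[
a_m \;=\; 3^{3^{\cdot^{\cdot^{\cdot^{3^{p_1(m)}}}}}}, \qquad b_k \;=\; 3^{3^{\cdot^{\cdot^{\cdot^{3^{p_2(k)}}}}}},
\]
a locally $2$-adic analytic interpolating function with only finitely many critical points in its disc of definition. Once this is established, the corollary immediately yields that $\{2^n a_m b_k x : n,m,k \in \mathbb{N}\}$ is dense in $\mathbb{R}/\mathbb{Z}$ for every irrational $x$.

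I would construct these interpolating functions by induction on the height $h$ of the tower of $3$'s. The base case rests on the standard fact that $3^y = \exp(y \log 3)$ is $2$-adic analytic on the disc $y \in 2\mathbb{Z}_2$: since $v_2(\log 3) = 1$, we have $v_2(y \log 3) \geq 2 > 1/(p-1) = 1$, placing the argument inside the disc of convergence of the $2$-adic exponential. For the inductive step, given a locally $2$-adic analytic $F_h(x)$ that interpolates the tower of height $h$ on some disc, I would pass to a subdisc on which $F_h(x) - 1 \in 2\mathbb{Z}_2$ (possible because $F_h$ takes odd values at integer points and is continuous), and then use the identity
\[
3^{F_h(x)} \;=\; 3 \cdot \exp\!\bigl((F_h(x) - 1) \log 3\bigr)
\]
together with the formal substitution lemma (Robert, Section 6.1.5) to conclude that $3^{F_h(x)}$ is locally $2$-adic analytic on this subdisc.

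The main obstacle is the inductive valuation bookkeeping: at each height one must control $v_2(F_h(x) - 1)$ uniformly on the chosen subdisc so that $(F_h(x) - 1) \log 3$ remains inside the disc of convergence of $\exp$, and one must ensure that the disc of definition does not degenerate as the induction proceeds. I would verify this by tracking valuations through the lifting-the-exponent identity $v_2(3^z - 1) = v_2(z) + 2$ for even $z$, together with the moderate-growth condition of the formal substitution lemma. Counting critical points is then automatic: the chain rule gives $(3^{f(x)})' = 3^{f(x)} \log 3 \cdot f'(x)$ and $3^{f(x)}$ never vanishes, so the critical points at level $h+1$ coincide with those at level $h$; since $p_1, p_2$ are non-constant polynomials, each tower has only finitely many critical points in its disc of definition.
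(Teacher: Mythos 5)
Your route is essentially the paper's: the paper's one-line proof ``combining the proposition with the observation regarding the $p$-adic substitution lemma'' is precisely the inductive construction you describe, namely building a locally $2$-adic analytic interpolation for each tower of $3$'s by repeated application of the Formal Substitution Lemma, passing to subdiscs (equivalently, arithmetic-progression subsequences via the preceding proposition) as needed, and then invoking the generalization of Corollary~\ref{coro-main-theorem} with $q=2$, where the single prime divisor collapses both the Positive-Entropy and $q$-Host hypotheses to a $2$-adic interpolation. Your observation that critical points persist unchanged through each exponentiation (since $(3^{f})' = 3^{f}\log 3\cdot f'$ and $3^{f}$ is a unit) correctly handles the finitely-many-critical-points hypothesis. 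One small numerical slip worth flagging: $v_{2}(\log 3)=2$, not $1$. Indeed $\log 3=\log(-3)=\log(1-4)$ in $\mathbb{Q}_{2}$ (because $\log(-1)=0$), and $v_{2}(\log(1+z))=v_{2}(z)$ once $v_{2}(z)\ge 2$; equivalently, the first two terms of $\log(1+2)=2-\tfrac{4}{2}+\tfrac{8}{3}-\tfrac{16}{4}+\cdots$ cancel and the remaining series has a unique term of minimal valuation $2$. This only improves your valuation estimates (the argument of $\exp$ then has $v_{2}\ge 3$ rather than $\ge 2$ on $2\mathbb{Z}_{2}$), so the conclusion stands, but the stated base-case justification should cite the correct value.
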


Notice that the number of distinct elements of the sequence \\ $\left\{2^{n}3^{3^{\cdot^{\cdot^{\cdot^{3^{p_{1}(m)}}}}}}3^{3^{\cdot^{\cdot^{\cdot^{3^{p_{2}(k)}}}}}} : n,m,k\in\mathbb{N}\right\}$ which are contained in the interval $\lbrack 1,N\rbrack$ is about the size of $\log\left(N\right)$ multiplied by $\log^{\left(i\right)}\left(N\right)$ for some $i>1$, hence we have shown the following result - 

\begin{cor}\label{cor:universally-densifying}
Let $r:\mathbb{N}\to\mathbb{N}$ be an increasing function, which grows at-least like $\log^{\left(i\right)}\left(N\right)$ for some $i>1$, then there exists a sequence $\{a_{n} : n\in\mathbb{N}\}$ of integers which satisfy $\left\vert\{a_{n} \}\cap \lbrack 1,N\rbrack \right\vert\leq r\left(N\right)\log\left(N\right)$ such that for any irrational $x\in\mathbb{R}\backslash\mathbb{Q}$, the set $\{a_{n}x : n\in \mathbb{N}\}$ is dense in $\mathbb{R}/\mathbb{Z}$.
\end{cor}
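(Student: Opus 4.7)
My plan is to take the explicit sequence appearing in the Sparse density theorem, tuning the tower height to $r$. Fix $i_0 > 1$ such that $r(N) \geq \log^{(i_0)}(N)$ for all sufficiently large $N$, set $d = i_0 + 1$, and let $T_d$ denote the tower $T_d(x) = 3^{3^{\cdot^{\cdot^{\cdot^{3^x}}}}}$ of $d$ copies of $3$. I consider
\[
a_{n,m,k} = 2^n \cdot T_d(m) \cdot T_d(k), \qquad n,m,k \in \mathbb{N}.
\]
The Sparse density theorem (applied with $p_1(x) = p_2(x) = x$; the extension to any fixed tower height is justified by the locally $p$-adic analytic generalization of Theorem~\ref{thm:p-adic} discussed in the paragraph preceding this corollary) ensures that this sequence is universally densifying.

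Next I count elements of this sequence in $[1, N]$. If $a_{n,m,k} \leq N$ then each factor is individually at most $N$, giving $n \leq \log_2 N$ and $T_d(m), T_d(k) \leq N$. Inverting the tower, the latter yield $m, k \leq \log^{(d)}(N)$, so
\[
\bigl|\{(n,m,k) : a_{n,m,k} \leq N\}\bigr| \leq (\log_2 N) \cdot \bigl(\log^{(d)}(N)\bigr)^2.
\]

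To convert this into the required bound $r(N)\log(N)$, I invoke the elementary inequality $(\log y)^2 \leq y$ valid for $y \geq e^4$, applied to $y = \log^{(i_0)}(N)$. Since $d = i_0 + 1$,
\[
\bigl(\log^{(d)}(N)\bigr)^2 = \bigl(\log\log^{(i_0)}(N)\bigr)^2 \leq \log^{(i_0)}(N) \leq r(N)
\]
for all large $N$. Enumerating the multiset $\{a_{n,m,k}\}$ as a sequence $\{a_n\}$ (any repetitions only improve the counting estimate, since distinct indices can only map to the same integer, reducing the cardinality of the image in $[1,N]$), and adjusting on finitely many small $N$ completes the argument.

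I do not anticipate any serious obstacle beyond careful bookkeeping; the real content is housed in the Sparse density theorem, and what remains is the simple observation that iterating the exponential tower enough times makes its inverse, the iterated logarithm, fall below any prescribed slowly growing $r$. The only mildly delicate point is that Theorem~\ref{thm:p-adic} (or rather its locally $p$-adic generalization) may require passing to a sub-progression in $m$ and $k$ to ensure the smooth $p$-adic interpolation hypothesis; but such a sub-progression changes the counting function only by a constant factor in the innermost variable, which is absorbed harmlessly into the iterated-logarithm estimate above.
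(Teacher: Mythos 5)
Your proposal is correct and follows essentially the same route as the paper, which proves the corollary only by a remark: it observes that the explicit sparse sequence from the Sparse density theorem has roughly $\log N \cdot \log^{(i)} N$ elements in $[1,N]$. You supply the missing bookkeeping — choosing the tower height $d = i_0 + 1$, inverting the tower to count lattice points, and using $(\log y)^2 \le y$ to absorb the square — all of which is implicit in the paper's phrase ``about the size of.'' The one point worth tightening is the constant: your bound reads $(\log_2 N)\,r(N)$, which exceeds $r(N)\log N$ by the factor $1/\log 2$ unless $\log$ is taken base $2$; this is easily absorbed by sharpening $(\log y)^2 \le y$ to $(\log y)^2 \le y/C$ for $y$ large, but as written that step is silently elided. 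Your remark about passing to a sub-progression in $m,k$ (to secure the locally $p$-adic analytic interpolation) is exactly the right caveat and matches the paper's treatment via the proposition preceding the Sparse density theorem.
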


\bibliographystyle{plain}
\bibliography{bibliography-Furstenberg}

\end{document}